\newtheorem{Theorem}{Theorem}[section]
\newtheorem{theorem}[Theorem]{Theorem}
\newtheorem{proposition}[Theorem]{Proposition}
\newtheorem{lemma}[Theorem]{Lemma}
\newtheorem{corollary}[Theorem]{Corollary}
\theoremstyle{definition}
\newtheorem{remark}[Theorem]{Remark}
\theoremstyle{definition}
\newtheorem{example}[Theorem]{Example}
\title{Connected components of Isom($\mathbb{H}^3$)-representations of non-orientable surfaces}
\author{Juan Luis Durán Batalla
	\footnote{Supported by doctoral grant BES-2016-079278, also partially supported by the 
		Micinn/FEDER  grant
		PGC2018-095998-B-I00}}
\date{\today}
\begin{document}
\maketitle

\begin{abstract}
Let $N_k$ denote the closed non-orientable surface of genus $k$. In this paper we study the behaviour of the `square map' from the group of isometries of hyperbolic 3-space to the subgroup of orientation preserving isometries. We show that there are $2^{k+1}$ connected components of representations of $\pi_1(N_k)$ in Isom$(\mathbb{H}^3)$, which are distinguished by the Stiefel-Whitney classes of the associated flat bundle. 
\end{abstract}

\section{Introduction}

Let $M$ be a closed surface and $G$ a Lie group. Let $\pi_1(M)$ denote the fundamental group of the surface. The set of homorphisms $\phi: \pi_1(M) \rightarrow G$  is called the variety of representations of the fundamental group of $M$ in $G$ and denoted $\mathrm{hom}(\pi_1(M),G)$. This object can be seen as arising naturally in the context of geometric structures for $M$. For instance, if $M$ is orientable and has a hyperbolic structure, then there is a holonomy map associated to it, $\mathrm{hol}: \pi_1(M) \rightarrow \mathrm{ Isom}^+(\mathbb{H}^2)$, that is, we obtain an element in $\mathrm{hom}(\pi_1(M), G)$, for $G=\mathrm{Isom}^+(\mathbb{H}^2)$ (actually, this map is well-defined up to conjugation by $G$). 

Here we will be interested in studying the case where $M$ is non-orientable and $G=\mathrm{Isom}(\mathbb{H}^3)$. Similar to the previous example, these representations can be related to complex projective structures under the identification $\mathrm{ Isom}(\mathbb{H}^3)\cong \mathrm{PSL}(2,\mathbb{C})\rtimes \mathbb{Z}_2$. Namely, if $M$ is orientable, it is known (see \cite{GKM}) that complex projective structures give  rise to non-elementary representations in $\mathrm{hom}(\pi_1(M), \mathrm{PSL}(2,\mathbb{C}))$ that can be lifted to $\mathrm{SL}(2, \mathbb{C})$ (as we will see later, this can be related to a Stiefel-Whitney class). In the non-orientable case, these representations must be orientation type preserving, that is, the image of an orientable loop must be an orienation preserving isometry and the other way around. Evidently, in general, this must not be satisfied for every representation in $\mathrm{hom}(\pi_1(M), \mathrm{ Isom}(\mathbb{H}^3))$.

The goal of this paper is to find topological invariants which classify the connected components of $\mathrm{hom}(\pi_1, G)$, for $M$ non-orientable and $G=\mathrm{Isom}(\mathbb{H}^3)$. This problem has been studied for a wide variety of groups, we hightlight W. Goldman's paper \cite{Goldman1}, where the author considers the case where $M$ is orientable and $G=\mathrm{PSL}(2,\mathbb{R})$ and $\mathrm{PSL}(2,\mathbb{C})$. Many of the tools and ideas in \cite{Goldman1} are translated to this paper. Regarding non-orientable manifolds, as far as we know, the problem has only been considered in a handful of cases. In \cite{HoLiu1} and \cite{HoLiu2}, a very general case is studied by the authors, but for $G$ connected. In \cite{Xia}, the disconnected group $\mathrm{PGL}(2,\mathbb{R})$ is inspected, however in the case of $M$ an orientable surface.  More closely related are Palesi's papers, $\cite{Palesi1}$ and \cite{Palesi2}, where the $\mathrm{PSL}(2,\mathbb{R})$ and $\mathrm{PGL}(2,\mathbb{R})$ cases are considered, respectively. 
 
To any representation $\phi \in \mathrm{hom}(\pi_1(M), G)$, there is an associated flat $G$-bundle over $M$ (see \cite{MilnorGBundle}). The Stiefel-Whitney classes are a classical invariant of the bundle and can be thought of as invariants of the representation $\phi$. In this sense, these cohomological classes are constant on connected components. In fact, it is enough to use them to distinguish different components and, thus, we obtain that they are indexed by the first and second Stiefel-Whitney classes, giving rise to the following result:

\begin{theorem} Let $N_k$ denote the closed non-orientable surface of genus $k$.
	The representation variety $\mathrm{hom}(\pi_1(N_k), G)$ has $2^{k+1}$ connected components.
\end{theorem}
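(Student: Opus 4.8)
The plan is to show that the pair of Stiefel--Whitney classes $(w_1(\phi),w_2(\phi))$ of the associated flat bundle $E_\phi$ is a complete invariant of the connected component of $\phi$. Since $\mathbb{H}^3$ is contractible, the inclusion of a point stabilizer $\mathrm{O}(3)\hookrightarrow G=\mathrm{Isom}(\mathbb{H}^3)$ is a homotopy equivalence, so $E_\phi$ carries honest Stiefel--Whitney classes $w_1(\phi)\in H^1(N_k;\mathbb{Z}_2)\cong\mathbb{Z}_2^{\,k}$ and $w_2(\phi)\in H^2(N_k;\mathbb{Z}_2)\cong\mathbb{Z}_2$, and the pair $(w_1,w_2)$ takes exactly $2^{k+1}$ values. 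The theorem thus breaks into three assertions: (i) $(w_1,w_2)$ is locally constant on $\mathrm{hom}(\pi_1(N_k),G)$; (ii) every value is attained; (iii) the set $F_{\alpha,w}$ of representations with $(w_1,w_2)=(\alpha,w)$ is connected. Assertion (i) is routine --- $w_1(\phi)$ is the composite $\pi_1(N_k)\xrightarrow{\phi}G\to\pi_0(G)=\mathbb{Z}_2$, locally constant because $G^0$ is open, and $w_2(\phi)$ is a characteristic class of $E_\phi$, hence constant on path components --- and for (ii) I would use the presentation $\pi_1(N_k)=\langle a_1,\dots,a_k\mid a_1^2\cdots a_k^2\rangle$: composing the inclusion $\mathbb{Z}_2\hookrightarrow G$ with an arbitrary homomorphism $\pi_1(N_k)\to\mathbb{Z}_2$ realizes any $w_1$ with $w_2=0$ (the relation is vacuous modulo $2$), while replacing a single $\phi(a_i)$ by an element that still squares to $1$ in $G$ but whose lift to $\widetilde G=\mathrm{SL}(2,\mathbb{C})\rtimes\mathbb{Z}_2$ squares to the nontrivial central element flips $w_2$ and fixes $w_1$; already $N_1=\mathbb{RP}^2$ exhibits all four components.

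The content is assertion (iii). Write $\phi(a_i)=(g_i,\epsilon_i)$ with $g_i\in\mathrm{PSL}(2,\mathbb{C})$ and $\epsilon_i=w_1(\phi)(a_i)$; since the $\mathbb{Z}_2$-action defining $G$ is complex conjugation $\sigma(g)=\bar g$, the relation becomes
\[
\prod_{i=1}^{k}s(A_i)=1,\qquad s(g,0)=g^2,\quad s(g,1)=g\bar g,
\]
where $s\colon G\to G^0$ is the square map of the abstract. Hence, for fixed $w_1=\alpha$, the space $\mathrm{hom}_\alpha$ is the ``twisted square relation variety'' $\{(g_i)\in\mathrm{PSL}(2,\mathbb{C})^k:\ \prod_i q_i(g_i)=1\}$, with $q_i$ equal to $g\mapsto g^2$ or $g\mapsto g\bar g$ according to $\epsilon_i$. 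The first task is a study of $s$ itself: squaring on $\mathrm{PSL}(2,\mathbb{C})$ is surjective with controlled fibres, whereas $g\mapsto g\bar g$ has image exactly the elements conjugate to their own conjugate (equivalently, with $\mathrm{tr}^2\in\mathbb{R}$), and this image carries a canonical $\mathbb{Z}_2$ of ``sheets'' recorded by whether an $\mathrm{SL}(2,\mathbb{C})$-lift $\widetilde g$ satisfies $\widetilde g\,\overline{\widetilde g}=\pm I$ --- the local counterpart of $w_2$. One also needs the fibres of $s$, and of partial products, to be connected enough to support deformations. The base cases $k=1,2$ of (iii) then follow directly: for $\mathbb{RP}^2$ one has $\{g:g^2=1\}=\{1\}\sqcup(\text{the conjugacy class of involutions})$ and $\{g:g\bar g=1\}$ is the union of the two real forms, the resulting four components separated by $w_2$; the Klein bottle $N_2$ is handled similarly, together with the $\alpha=0$ slices, for which $\mathrm{hom}(\pi_1(N_k),\mathrm{PSL}(2,\mathbb{C}))$ has two components (cf.\ \cite{HoLiu1,HoLiu2} or the inductive scheme below).

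For general $k$ I would establish connectivity of $F_{\alpha,w}$ by induction on the genus. Viewing $N_k$ as $N_{k-1}$ with a crosscap attached, $\mathrm{hom}(\pi_1(N_k),G)$ fibres over $G^{k-1}$ with fibre $\{A_k:\ s(A_k)=(A_1^2\cdots A_{k-1}^2)^{-1}\}$ over $(A_1,\dots,A_{k-1})$. Using surjectivity of $s$ and the fibre analysis, one deforms an arbitrary representation until the partial tuple $(A_1,\dots,A_{k-1})$ is brought to a standard irreducible position prescribed by its invariants, and then applies the inductive hypothesis to the capped-off copy of $N_{k-1}$ while the residual datum $A_k$ ranges over a connected set, matching $w_2$ at each stage via the sign invariant above. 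It is convenient to note that $F_{\alpha,w}\cong F_{\alpha',w}$ whenever $\alpha$ and $\alpha'$ lie in the same orbit of the mapping class group action on $H^1(N_k;\mathbb{Z}_2)$ (realized by $\phi\mapsto\phi\circ f_\ast$), and there are only a few such orbits, keeping the case analysis finite.

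The main obstacle, I expect, is precisely this deformation argument, for two reasons. First, the reducible (elementary) representations --- those for which all the $g_i$ fix a common point of $\partial\mathbb{H}^3$ or preserve a common geodesic --- form the degeneration locus of the fibration, where naive moves get stuck, exactly as in Goldman's analysis of $\mathrm{PSL}(2,\mathbb{R})$; these must be connected to the irreducible stratum by hand. Second, a crosscap with $\epsilon_i=1$ contributes the map $g\mapsto g\bar g$, whose image is a proper and rather rigid subset of $\mathrm{PSL}(2,\mathbb{C})$, so the usual ``wiggle one factor freely'' move is unavailable and must be replaced by deformations compatible with the real-form structure while keeping $\prod_i s(A_i)$ equal to $1$. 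Verifying throughout that $w_2$ --- the finer of the two invariants, the one not manifestly continuous from elementary group theory --- really is the lifting obstruction and is preserved by all of these moves is the remaining delicate point.
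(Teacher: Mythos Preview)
Your overall strategy --- show that $(w_1,w_2)$ is a complete invariant, with the work concentrated in proving each fibre $F_{\alpha,w}$ connected --- is exactly the paper's. You have also correctly identified the two genuine difficulties: the reducible locus and the non-surjectivity/rigidity of $g\mapsto g\bar g$ on $G_-$. What your proposal lacks, and what the paper supplies, are the concrete tools that resolve them. The paper computes the fibres of $Q\vert_{G_-}$ explicitly and shows that away from $Q^{-1}(\pm Id)$ the map is a \emph{submersion}, hence has path-lifting; it then composes with the Fricke character map $\chi\colon\widetilde{G_+}^2\to\mathbb{C}^3$, $(A,B)\mapsto(\mathrm{tr}\,A,\mathrm{tr}\,B,\mathrm{tr}\,AB)$, proving that $\chi\circ Q_2$ is a submersion precisely where $Q(A)$ and $Q(B)$ do not commute. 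This reduces connectivity to moving traces inside the explicit region $(-2,\infty)^3$ (or $(-2,\infty)^2\times(-\infty,2)$ for $w_2=1$), which is elementary. The reducible locus is handled not ``by hand'' but by a density lemma: the open subset $X''_n(C)$ of tuples with at least one non-commuting pair among the first $n-2$ squares is dense in $X_n(C)$, so it suffices to prove \emph{that} is connected.

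One specific point where your outline would run into trouble is the inductive scheme $N_k\leadsto N_{k-1}$. If the last crosscap has $\epsilon_k=1$, the fibre over $(A_1,\dots,A_{k-1})$ is $Q^{-1}\bigl((Q(A_1)\cdots Q(A_{k-1}))^{-1}\bigr)$, and this is \emph{empty} unless that product lies in $\mathcal{J}=\mathrm{Im}\,Q\vert_{G_-}$, a proper subset of $\widetilde{G_+}$; so the ``fibration'' is only defined over a region you would first have to deform into. The paper sidesteps this by inducting $N_k\leadsto N_{k-2}$ instead: cutting off two crosscaps, the relevant fibre is $X_2(\nu^{-1}C)$, and Proposition~\ref{pp:x2(c)} shows $X_2(C)$ is nonempty and connected for \emph{every} $C\in\widetilde{G_+}$, not just $C\in\mathcal{J}$. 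Finally, rather than invoke mapping-class-group orbits on $H^1$, the paper simply treats the orientation-type-preserving case $w_1=(1,\dots,1)$ in full (the hardest, since every factor is the rigid map $g\mapsto g\bar g$) and observes that for any other $w_1$ at least one factor is $Q_+\colon G_+\to\widetilde{G_+}$, which is a diffeomorphism away from $\mathrm{tr}=-2$ and makes the remaining cases strictly easier.
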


The paper is structured as follows. In Section~\ref{section:sq_map}, the fibers of the so-called `square map' $[A]\in \mathrm{PSL}(2,\mathbb{C}) \mapsto A^2 \in \mathrm{SL}(2,\mathbb{C})$ are computed, and as a corollary, we have the classification of non-orientable isometries of $\mathbb{H}^3$.  In Section~\ref{section:path_lifting}, we are interested in the path lifting behaviour along the square map and other related maps. Finally, in Section~\ref{section:connected_components}, the results of the previous section are pieced together, obtaining the aforementioned theorem; with a particular emphasis put into the orientation type preserving components.

\section{The square map of non-orientable isometries}
\label{section:sq_map}
Let $G$ denote the group of isometries of hyperbolic $3$-space, $\mathbb{H}^3$. The group $G$ can be identified with 
$$
G=\mathrm{Isom}(\mathbb{H}^3)\cong\mathrm{PSL}(2,\mathbb{C})\rtimes\mathbb{Z}_2
$$
 via the upper half-space model of $\mathbb{H}^3$. We will denote by $G_+$ the connected component of the identity, that is, the subgroup of oriented isometries, $\mathrm{Isom}^+(\mathbb{H}^3)\cong \mathrm{PSL}(2,\mathbb{C})$, and by $G_-$, the subset of nonorientable ones, $\mathrm{Isom}^-(\mathbb{H}^3)$, hence $G=G_-\sqcup G_+$. 
 
The tangent space at any point in $G$ can be identified with its Lie algebra, namely, $\mathfrak{sl}(2,\mathbb{C})\cong \mathfrak{sl}(2,\mathbb{R})\oplus i \mathfrak{sl}(2,\mathbb{R})$. The Lie algebra of $G_+$ is also $\mathfrak{sl}(2,\mathbb{C})$. Regarding $G_-$, we can associate the tangent space at any point with $\mathfrak{sl}(2,\mathbb{C})$ via the inclusion $G_-\hookrightarrow G$, but we can no longer speak of it as the Lie algebra, as $G_-$ is not a group.
 Given an orientation preserving isometry $A \in G_+$, $A_c \in G_-$ will be the composition of $A$ and the complex conjugation $c$.

The universal cover of $G_+$ is the group $\mathrm{SL}(2,\mathbb{C})$ and will be denoted by $\widetilde{G_+}$. There is a natural map we are interested in:
\[
\begin{array}{ccl}
Q:G & \rightarrow & \widetilde{G_+} \\
{[}A] & \mapsto & A^2,
\end{array}
\]
which is well defined as $(\pm A)^2=A^2$. For $A=B_c$, $B\in G_+$, the square map can also be written as $Q(A)=B\overline{B}$, where $\overline{B}$ denotes the complex conjugate of $B$.

The behaviour of $Q$ changes drastically in each connected component $G_-$ and $G_+$. From now onwards, we will interested in its restriction to $G_-$. The following proposition describes the fiber of the square map $Q$.

\begin{proposition}
\label{prop:classification_q1}
Let $B$ be a matrix in $\widetilde{G_+}$ and let us consider the restriction $Q:G_-\to \widetilde{G_+}$. Up to conjugation with an element of $\widetilde{G_+}$, we can assume $B$ is in its Jordan normal form, then
\begin{itemize}
\item If $B=\left( \begin{smallmatrix} \lambda & 0 \\ 0 & \lambda^{-1}
\end{smallmatrix} \right)$, with $\lambda \in \mathbb{R}_+\setminus\{1\}$, the fiber of $B$ is 
$$
Q^{-1}(B)=\{\left( \begin{smallmatrix} a & 0 \\ 0 & a^{-1}
\end{smallmatrix} \right)_c \mid |a|^2=\lambda \}.
$$
\item If  $B =\left( \begin{smallmatrix} e^{i\theta} & 0 \\ 0 & e^{-i\theta}
\end{smallmatrix} \right)$, $\theta \in \mathbb{R}\setminus \{n\pi| n\in \mathbb{Z}\}$, the fiber of $B$ is 
	$$
	Q^{-1}(B)=\{\left( \begin{smallmatrix} 0 & \rho e^{i(\theta+\pi)/2} \\ -\rho^{-1} e^{-i(\theta+\pi)/2} & 0
\end{smallmatrix} \right)_c \mid \rho \in \mathbb{R}^* \}.
$$
\item  If $ B =\left( \begin{smallmatrix} 1 & 1 \\ 0 & 1
\end{smallmatrix} \right) $, the fiber of $B$ is 
$$
Q^{-1}(B)=\{\left( \begin{smallmatrix} 1 & \tau \\ 0 & 1
\end{smallmatrix} \right)_c \mid \mathrm{Re}(\tau)=1/2 \}.$$
\item If $B=Id$, the fiber of $B$ is
$$
Q^{-1}(Id)=\{\left( \begin{smallmatrix} a & b \\ c & \overline{a}
\end{smallmatrix} \right)_c  \mid a\in \mathbb{C}, b,c\in i\mathbb{R}, |a|^2-bc=1 \}=\mathrm{Ad}({G_+}) \left( \begin{smallmatrix}
0 & i \\ i & 0
\end{smallmatrix} \right)_c.
$$
\item If $B=-Id$, the fiber of $B$ is
$$
Q^{-1}(-Id)=\{\left( \begin{smallmatrix} a & b \\ c & -\overline{a}
\end{smallmatrix} \right)_c  \mid a\in \mathbb{C}, b,c\in \mathbb{R}, |a|^2+bc=-1 \}=\mathrm{Ad}(G_+) \left( \begin{smallmatrix}
0 & -1 \\ 1 & 0
\end{smallmatrix} \right)_c .
$$
\item Otherwise, the fiber of $B$ is empty.
\end{itemize}
Finally, if $B$ is not in Jordan form and conjugating it by $g \in \widetilde{G_+}$ takes it to the Jordan form, then the fiber $Q^{-1}(B)$ is computed from the previous cases by conjugating the corresponding fiber by $g^{-1}$.  Furthermore, in every case the fiber is connected.

\end{proposition}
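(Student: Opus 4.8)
The plan is to reduce to $B$ in Jordan form via an equivariance property, solve $A\overline{A}=B$ case by case using centralisers, and then handle connectedness, the fibres over $\pm\mathrm{Id}$ being the delicate point. \emph{Equivariance.} For $h=[g]\in G_+$ and $A_c\in G_-$, a direct computation in $\mathrm{PSL}(2,\mathbb{C})\rtimes\mathbb{Z}_2$ gives $h\,A_c\,h^{-1}=(gA\overline{g}^{-1})_c$, hence $Q(h\,A_c\,h^{-1})=(gA\overline{g}^{-1})\overline{(gA\overline{g}^{-1})}=g\,A\overline{A}\,g^{-1}=\mathrm{Ad}_g\!\bigl(Q(A_c)\bigr)$. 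Thus $Q^{-1}(gBg^{-1})=h\,Q^{-1}(B)\,h^{-1}$, which is exactly the final sentence of the statement; since $G_+$ is connected and $h(\cdot)h^{-1}$ is a homeomorphism, it also reduces the existence of the fibre, its explicit form and its connectedness to the case of $B$ in Jordan normal form.

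\emph{A necessary condition.} Suppose $A_c\in Q^{-1}(B)$, i.e.\ $A\overline{A}=B$ for some (equivalently, either) lift $A\in\mathrm{SL}(2,\mathbb{C})$ of $[A]$. Then $\overline{B}=\overline{A}\,A=A^{-1}(A\overline{A})A=A^{-1}BA$, so $B$ is conjugate to $\overline{B}$ in $\mathrm{SL}(2,\mathbb{C})$; in particular $\mathrm{tr}\,B\in\mathbb{R}$, and writing an eigenvalue as $\mu=re^{i\varphi}$ the relation $\mu+\mu^{-1}\in\mathbb{R}$ forces $r=1$ or $\varphi\in\pi\mathbb{Z}$. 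Hence any $B$ with an eigenvalue that is neither real nor of modulus $1$ has empty fibre, and the remaining $B$ are $\pm\mathrm{Id}$, a $\pm$-unipotent Jordan block, or diagonalizable with real or unimodular eigenvalues.

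\emph{Solving $A\overline{A}=B$.} For each remaining normal form I would use $\overline{B}=A^{-1}BA$ to pin $A$ down modulo the centraliser of $B$:
\begin{itemize}
\item If $B=\mathrm{diag}(\mu,\mu^{-1})$ with $\mu\in\mathbb{R}\setminus\{0,\pm1\}$, then $\overline{B}=B$, so $A$ centralises $B$ and is diagonal; then $A\overline{A}=\mathrm{diag}(|a|^2,|a|^{-2})$, which equals $B$ iff $|a|^2=\mu$. This is solvable only for $\mu>0$ and gives the first bullet (and nothing when $\mu<0$).
\item If $B=\mathrm{diag}(e^{i\theta},e^{-i\theta})$ with $\theta\notin\pi\mathbb{Z}$, then $\overline{B}=wBw^{-1}$ for the Weyl element $w=\left(\begin{smallmatrix}0&-1\\1&0\end{smallmatrix}\right)$, so $Aw$ centralises $B$, i.e.\ $A=\mathrm{diag}(t,t^{-1})w^{-1}$; imposing $A\overline{A}=B$ fixes $\arg t$ modulo $\pi$ and leaves $|t|$ free, giving the antidiagonal family of the second bullet.
\item If $B=\left(\begin{smallmatrix}1&1\\0&1\end{smallmatrix}\right)$, then $\overline{B}=B$, $A$ is unipotent upper triangular, and $A\overline{A}=\left(\begin{smallmatrix}1&2\mathrm{Re}\,\tau\\0&1\end{smallmatrix}\right)=B$ iff $\mathrm{Re}\,\tau=\tfrac12$; for $B=-\left(\begin{smallmatrix}1&1\\0&1\end{smallmatrix}\right)$ the same argument yields $\mathrm{tr}(A\overline{A})=2\neq-2$, so the fibre is empty.
\item If $B=\pm\mathrm{Id}$, the equation is $\overline{A}=\pm A^{-1}$; comparing $A=\left(\begin{smallmatrix}a&b\\c&d\end{smallmatrix}\right)$ with $\pm\left(\begin{smallmatrix}d&-b\\-c&a\end{smallmatrix}\right)$ and using $\det A=1$ forces $d=\overline{a}$, $b,c\in i\mathbb{R}$, $|a|^2-bc=1$ when $B=\mathrm{Id}$, and $d=-\overline{a}$, $b,c\in\mathbb{R}$, $|a|^2+bc=-1$ when $B=-\mathrm{Id}$, which are exactly the stated sets.
\end{itemize}
Together with the previous step, this also settles the ``otherwise empty'' bullet.

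\emph{Connectedness and the $\mathrm{Ad}(G_+)$-description.} For the diagonal, antidiagonal and unipotent fibres connectedness is visible from the parametrisations: the circle $\{|a|=\sqrt\lambda\}$, the line $\{\mathrm{Re}\,\tau=\tfrac12\}$, and the set $\{\rho e^{i(\theta+\pi)/2}:\rho\in\mathbb{R}^*\}$, which becomes a single ray — hence connected — once one uses the identification $A\sim-A$ of $\mathrm{PSL}(2,\mathbb{C})$, under which $\rho$ and $-\rho$ coincide. The cases $B=\pm\mathrm{Id}$ are the subtle ones: as subsets of $\mathrm{SL}(2,\mathbb{C})$ the solution sets may be disconnected (for $B=-\mathrm{Id}$ the constraint $bc=-1-|a|^2\le-1$ splits into $\{b>0,\,c<0\}$ and $\{b<0,\,c>0\}$), but the antipodal identification $(a,b,c)\sim(-a,-b,-c)$ of $\mathrm{PSL}(2,\mathbb{C})$ glues the pieces. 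Equivalently — and this simultaneously yields $Q^{-1}(\mathrm{Id})=\mathrm{Ad}(G_+)\left(\begin{smallmatrix}0&i\\i&0\end{smallmatrix}\right)_c$ and $Q^{-1}(-\mathrm{Id})=\mathrm{Ad}(G_+)\left(\begin{smallmatrix}0&-1\\1&0\end{smallmatrix}\right)_c$ — one shows that the twisted conjugation $X\mapsto gX\overline{g}^{-1}$ of $\mathrm{SL}(2,\mathbb{C})$ acts transitively on $\{X:X\overline{X}=\pm\mathrm{Id}\}$ (the $\mathrm{SL}_2$ instance of Hilbert 90; concretely, any two real, resp.\ quaternionic, structures on $\mathbb{C}^2$ compatible with the standard volume form are $\mathrm{SL}(2,\mathbb{C})$-conjugate), so that these two fibres are orbits of the connected group $G_+$. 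I expect this last point to be the main obstacle — proving the transitivity, hence the full equality with the $\mathrm{Ad}(G_+)$-orbit rather than just the explicit matrix set, while keeping the $\mathrm{SL}(2,\mathbb{C})$ versus $\mathrm{PSL}(2,\mathbb{C})$ bookkeeping straight throughout; the rest reduces to short centraliser computations.
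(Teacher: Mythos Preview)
Your proof is correct and takes a genuinely different route from the paper's. The paper proceeds by brute force: it writes $A=\left(\begin{smallmatrix}a&b\\c&d\end{smallmatrix}\right)_c$, expands $A\overline{A}$ entry by entry, and solves the resulting $2\times 2$ system directly for each Jordan type of $B$, with the empty fibres emerging only as cases where the equations admit no solution; for the $\mathrm{Ad}(G_+)$-orbit description of $Q^{-1}(\pm\mathrm{Id})$ it performs an explicit two-step conjugation, first by a unipotent $\left(\begin{smallmatrix}1&\nu\\0&1\end{smallmatrix}\right)$ and then by a diagonal $\left(\begin{smallmatrix}\mu&0\\0&\mu^{-1}\end{smallmatrix}\right)$, bringing an arbitrary element of the fibre to the chosen representative. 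Your approach is more structural: the key observation $\overline{B}=A^{-1}BA$ both singles out \emph{a priori} which $B$ can lie in the image (real trace, eigenvalues real or of modulus one) and confines $A$ to a single coset of the centraliser of $B$ before one imposes $A\overline{A}=B$, so the case analysis is shorter and the ``otherwise empty'' clause falls out uniformly rather than case by case. For $B=\pm\mathrm{Id}$ your Galois-cohomological argument (vanishing of $H^1(\mathrm{Gal}(\mathbb{C}/\mathbb{R}),\mathrm{SL}(2,\mathbb{C}))$ and uniqueness of the quaternionic structure on $\mathbb{C}^2$) is correct and delivers transitivity, hence connectedness and the orbit description simultaneously; the paper's explicit conjugations are more elementary and self-contained but amount to reproving that special case by hand. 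Your flagged worry about $\mathrm{SL}$ versus $\mathrm{PSL}$ bookkeeping is legitimate but, as you already note, the identification $A\sim -A$ is exactly what collapses the two apparent components in the elliptic fibre and in $Q^{-1}(-\mathrm{Id})$.
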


\begin{proof}
We start by noticing the fact that, for $B\in G_+$ with fiber $U=Q^{-1}(B)$, the fiber of a conjugate $\mathrm{Ad}(g)B$, $g\in \widetilde{G_+}$ is $\mathrm{Ad}([g])U$. Moreover, the conjugacy class of any element in $\widetilde{G_+}$ is either diagonal $\left( \begin{smallmatrix} \lambda & 0 \\ 0 & \lambda^{-1} \end{smallmatrix} \right)$ or parabolic $\pm \left(   \begin{smallmatrix} 1 & 1 \\ 0 & 1 \end{smallmatrix} \right)$. Hence, we only have to compute the fibers for these two kinds of matrices. This is straightforward but tedious.

Let $A\in Q^{-1}(B)$, $A=\left(\begin{smallmatrix} a & b \\ c & d \end{smallmatrix} \right)_c$. Then, 
\begin{equation}
\label{Eqn:A2}
	A^2 = \begin{pmatrix} |a|^2+b\overline{c} & a\overline{b}+b\overline{d} \\ c\overline{a}+\overline{c}d & |d|^2+\overline{b}c \end{pmatrix} .
\end{equation}	
	 We will solve for $A^2=B$.

\underline{Case 1:} If $B$ is diagonal, we have from the off-diagonals entries, $a\overline{b}+b\overline{d}=0$ and $\overline{c}a+c\overline{d}=0$. By multipying the first expression by $c$, and the second one by $b$ we get $0=a(2i \mathrm{Im}(c\overline{b}))$. Similarly, we can obtain $0=d(2i\mathrm{Im}(\overline{c}b)).$ Hence, either $\mathrm{Im}(\overline{c}b)=0$ or $\mathrm{Im}(\overline{c}b)\neq 0$ (then, $a=d=0$).

\underline{If $\mathrm{Im}(\overline{c}b) \neq 0:$} $a=d=0$, then the only possibility is $A= \left(\begin{smallmatrix} 0 & b \\ -b^{-1} & 0 \end{smallmatrix} \right)_c$, $B=\left(\begin{smallmatrix} -b/\overline{b} & 0 \\ 0 & -\overline{b}/b \end{smallmatrix} \right)= \left(\begin{smallmatrix} e^{i\theta} & 0 \\ 0 & e^{-i\theta} \end{smallmatrix} \right)$, for some $\theta \in \mathbb{R}$.

\underline{If $\mathrm{Im}(\overline{c}b)=0:$} Then $\lambda \in \mathbb{R}$. We can assume either $a$ or $d$ different from zero as this was already covered. Indeed, it is easy to see that if one of them equal to zero, the other one is zero too. We can write $d=\frac{1+bc}{a}$ and substitute in one of the off-diagonal entries of~\eqref{Eqn:A2} to get $-b=\overline{b}(|a|^2+\overline{c}b)=\overline{b}(\lambda)$. We get a similar equation for $c$. We conclude that either $\lambda=\pm 1$ ($B=Id$) or $b=c=0$.

\begin{itemize}
	\item If $b=c=0$, we have $d=a^{-1}$ and $|a|^2=\lambda$.
	\item If we consider $\lambda=+1$, $B=Id$, $b,c \in i\mathbb{R}$,  from the off-diagonal equations we get $d=\overline{a}$ and it must also be satisfied $|a|^2+b\overline{c}=1$ (which is equivalent to $\mathrm{det}(A)=1$).
	\item If we consider $\lambda=-1$, $B=-Id$, $b,c \in \mathbb{R}$, and, analogously, $d=-\overline{a}$ and $|a|^{2}+bc=-1$ (equivalent again to $\mathrm{det}(A)=1$).
\end{itemize}

Last two cases correspond to the fiber of $\pm Id$. By conjugating first by a traslation $\left(\begin{smallmatrix} 1 & \nu \\ 0 & 1
\end{smallmatrix} \right)$ and then by $\left(\begin{smallmatrix}\mu & 0 \\ 0 & \mu^{-1}
\end{smallmatrix} \right)$ we can see that $Q^{-1}(Id)=\mathrm{Ad}(G_+) \left( \begin{smallmatrix}
0 & i \\ i & 0
\end{smallmatrix} \right)_c $ and $Q^{-1}(-Id)=\mathrm{Ad}(G_+) \left( \begin{smallmatrix}
0 & 1 \\ -1 & 0
\end{smallmatrix} \right)_c$. 

\underline{Case 2:} If $B$ is parabolic, from the diagonal equations we get $b\overline{c}\in \mathbb{R}$ and therefore, multiplying the off-diagonal equation $a\overline{b}+b\overline{d}=\pm 1$ by $c$ we obtain $c=0$. Hence, $d=a^{-1}$ and $|a|=1$. Finally, by writing $a$ and $b$ in polar form and focusing on the off-diagonal equation, we obtain $a=\pm 1$ and $\mathrm{Re} \ b=\pm 1/2$.

\end{proof}

\begin{remark}
\label{rk:def-hyp-ell-par}
Let $A\in G_-$ such that $Q(A)\neq Id$. We will say $A$ is \emph{hyperbolic}, \emph{elliptic} or \emph{parabolic} according to $Q(A)$ being hyperbolic, elliptic or parabolic, respectively.
\end{remark}

\begin{corollary} The image of $Q$ is 
	$$
	\mathcal{J}:=\{A\in \widetilde{G_+}\mid \mathrm{tr}(A)\in (-2,\infty) \}\cup \{-Id \},
	$$
	 where $\mathrm{tr}(A)$ denotes the trace of $A$.
\end{corollary}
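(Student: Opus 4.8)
The plan is to read the image straight off Proposition~\ref{prop:classification_q1}. Since the trace is invariant under conjugation by $\widetilde{G_+}$ and, by the last sentence of that proposition, $Q^{-1}(B)$ is nonempty if and only if $Q^{-1}(gBg^{-1})$ is nonempty for $g\in\widetilde{G_+}$, the image of $Q\colon G_-\to\widetilde{G_+}$ is a union of conjugacy classes of $\widetilde{G_+}=\mathrm{SL}(2,\mathbb{C})$. So it suffices to decide which conjugacy classes meet $\mathrm{Im}(Q)$ and then translate that into a condition on the trace.

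First I would recall the conjugacy classification in $\mathrm{SL}(2,\mathbb{C})$: every element is conjugate either to a diagonal matrix $\left(\begin{smallmatrix}\lambda & 0\\ 0 & \lambda^{-1}\end{smallmatrix}\right)$ with $\lambda\in\mathbb{C}^*$, or to one of the two parabolic matrices $\pm\left(\begin{smallmatrix}1 & 1\\ 0 & 1\end{smallmatrix}\right)$. Running through the bullet list of Proposition~\ref{prop:classification_q1}, the classes with nonempty fiber are exactly: the diagonal classes with $\lambda\in\mathbb{R}_+\setminus\{1\}$ (whose trace $\lambda+\lambda^{-1}$ lies in $(2,\infty)$, since $\lambda+\lambda^{-1}-2=(\sqrt{\lambda}-1/\sqrt{\lambda})^2>0$), the diagonal classes with $\lambda=e^{i\theta}$, $\theta\notin\pi\mathbb{Z}$ (trace $2\cos\theta\in(-2,2)$), the class of $\left(\begin{smallmatrix}1 & 1\\ 0 & 1\end{smallmatrix}\right)$ and the class of $Id$ (both of trace $2$), and the class of $-Id$ (trace $-2$); every other conjugacy class, in particular that of $-\left(\begin{smallmatrix}1 & 1\\ 0 & 1\end{smallmatrix}\right)$ and every diagonal class with $\lambda$ neither real positive nor of modulus one, has empty fiber.

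It remains to match this list of classes with $\mathcal{J}$. For a real number $t\notin\{-2,2\}$ the matrices of trace $t$ form a single conjugacy class (diagonalizable, eigenvalues the two roots of $x^2-tx+1$, which are real and positive for $t>2$ and of modulus one for $t\in(-2,2)$), so $\{\mathrm{tr}=t\}\subseteq\mathrm{Im}(Q)$ for every $t\in(-2,2)\cup(2,\infty)$; for $t=2$ both classes (that of $Id$ and the parabolic one) lie in $\mathrm{Im}(Q)$, so $\{\mathrm{tr}=2\}\subseteq\mathrm{Im}(Q)$; and for $t=-2$ only the class $\{-Id\}$ lies in $\mathrm{Im}(Q)$. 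Conversely the classes listed above force $\mathrm{tr}(B)\in(-2,\infty)$ (a real interval, so complex-trace matrices are automatically excluded) unless $B=-Id$. Combining the two inclusions gives $\mathrm{Im}(Q)=\{A\in\widetilde{G_+}\mid\mathrm{tr}(A)\in(-2,\infty)\}\cup\{-Id\}=\mathcal{J}$.

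The only delicate point — more a bookkeeping remark than a genuine obstacle — is the behaviour at trace $\pm2$, where the trace does not determine the conjugacy class: one must note explicitly that both $Id$ and $\left(\begin{smallmatrix}1 & 1\\ 0 & 1\end{smallmatrix}\right)$ are in the image while $-\left(\begin{smallmatrix}1 & 1\\ 0 & 1\end{smallmatrix}\right)$ is not, which is precisely why $\mathcal{J}$ contains the whole trace-$2$ set but only the single point $-Id$ at trace $-2$. Everything needed is already contained in Proposition~\ref{prop:classification_q1}.
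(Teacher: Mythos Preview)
Your proposal is correct and is exactly the intended argument: the paper states this corollary with no proof, treating it as an immediate reading of the fibers computed in Proposition~\ref{prop:classification_q1}, and you have simply spelled out that reading (conjugacy invariance of the image, the trace values attained by each nonempty case, and the bookkeeping at trace $\pm 2$).
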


\begin{remark}
Proprosition $\ref{prop:classification_q1}$ can be extended to the quotient map $[Q]: G_-\mapsto G_+$. Then, $Q^{-1}{[Id]}$ and $Q^{-1}([\left( \begin{smallmatrix} e^{i\theta} & 0 \\ 0 & e^{-i\theta}
\end{smallmatrix} \right) ]) $ have two connected components whereas $Q^{-1}([\left( \begin{smallmatrix} \lambda & 0 \\ 0 & \lambda^{-1}
	\end{smallmatrix} \right)])$ and $Q^{-1}([\left( \begin{smallmatrix} 1 & \tau \\ 0 & 1
	\end{smallmatrix} \right)])$ just one.
\end{remark}

\begin{corollary}[Classification of non-orientable isometries of $\mathbb{H}^3$]
Let us consider a non-orientable isometry of hyperbolic $3$-space. Then, up to conjugation, it is one of the following:
\begin{itemize}
	\item Composition of a reflection on a hyperplane with hyperbolic translation in an axis contained in said hyperplane.
	\item Composition of a reflection on a hyperplane with a rotation in an axis perpendicular to the aforementioned hyperplane.
	\item Composition of a reflection on a hyperplane with a parabolic transformation with fixed point an ideal point of the hyperplane.
	\item Reflection on a hyperplane. 
	\item Inversion through a point. 
\end{itemize}
\end{corollary}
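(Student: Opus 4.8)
The plan is to deduce the list directly from Proposition~\ref{prop:classification_q1}, adding a geometric reading of each fiber in the upper half-space model. Given a non-orientable isometry $A\in G_-$, set $B=Q(A)=A^2\in\widetilde{G_+}$. By the proposition, $Q^{-1}(B)$ is nonempty exactly when $B$ is conjugate in $\widetilde{G_+}$ to one of $\left(\begin{smallmatrix}\lambda&0\\0&\lambda^{-1}\end{smallmatrix}\right)$ with $\lambda\in\mathbb{R}_+\setminus\{1\}$, $\left(\begin{smallmatrix}e^{i\theta}&0\\0&e^{-i\theta}\end{smallmatrix}\right)$ with $\theta\notin\pi\mathbb{Z}$, $\left(\begin{smallmatrix}1&1\\0&1\end{smallmatrix}\right)$, $Id$, or $-Id$. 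Conjugating $A$ by $g\in G_+$ conjugates $B$ by $g$, so we may assume $B$ is one of these five normal forms and $A$ is the explicit element of $Q^{-1}(B)$ exhibited in the proposition; it then suffices to normalise each such fiber to a single representative by a further $G_+$-conjugation and to name the resulting isometry.

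For the computations, recall that $c$ is the reflection in the totally geodesic plane $P_0$ whose ideal boundary is $\mathbb{R}\cup\{\infty\}$; that $\left(\begin{smallmatrix}\mu&0\\0&\mu^{-1}\end{smallmatrix}\right)$ acts as $z\mapsto\mu^{2}z$, hence is loxodromic or elliptic about the vertical geodesic $\gamma_0$ over $0$ (whose endpoints $0,\infty$ lie on $\partial P_0$, so $\gamma_0\subset P_0$); that $\left(\begin{smallmatrix}1&\nu\\0&1\end{smallmatrix}\right)$ is the parabolic $z\mapsto z+\nu$ fixing $\infty$; and that $cM=\overline{M}c$, which makes conjugates of an element $M_c$ immediate to compute. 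Conjugating $\left(\begin{smallmatrix}a&0\\0&a^{-1}\end{smallmatrix}\right)_c$ by $\left(\begin{smallmatrix}\mu&0\\0&\mu^{-1}\end{smallmatrix}\right)$ replaces $a$ by $a\mu/\overline{\mu}$, so $a$ can be rotated to $\sqrt{\lambda}\in\mathbb{R}_+$: in the hyperbolic case $A$ is conjugate to $\left(\begin{smallmatrix}\sqrt{\lambda}&0\\0&1/\sqrt{\lambda}\end{smallmatrix}\right)_c$, that is $z\mapsto\lambda\overline{z}$, the reflection in $P_0$ composed with the hyperbolic translation of length $\log\lambda$ along $\gamma_0\subset P_0$. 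Using instead a parabolic conjugation $\left(\begin{smallmatrix}1&\nu\\0&1\end{smallmatrix}\right)$, which shifts $\tau$ by $2i\,\mathrm{Im}\,\nu$, one likewise reduces the parabolic case to $A$ conjugate to $\left(\begin{smallmatrix}1&1/2\\0&1\end{smallmatrix}\right)_c$, that is $z\mapsto\overline{z}+1/2$, the reflection in $P_0$ composed with the parabolic $z\mapsto z+1/2$ fixing the ideal point $\infty\in\partial P_0$. For $B=Id$ and $B=-Id$ the proposition already displays $Q^{-1}(B)$ as a single $\mathrm{Ad}(G_+)$-orbit, so $A$ is conjugate to $\left(\begin{smallmatrix}0&i\\i&0\end{smallmatrix}\right)_c$, i.e. $z\mapsto1/\overline{z}$, the reflection in the totally geodesic plane with ideal boundary the unit circle $\{|z|=1\}$; respectively $A$ is conjugate to $\left(\begin{smallmatrix}0&-1\\1&0\end{smallmatrix}\right)_c$, i.e. $z\mapsto-1/\overline{z}$, the antipodal involution of the sphere at infinity, which is the inversion through its unique fixed point $(0,1)$ in $\mathbb{H}^3$.

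The elliptic case is the one that genuinely needs a factorisation. After normalising the scalar $\rho$ to $1$ (again by a diagonal conjugation), $A$ is conjugate to $\left(\begin{smallmatrix}0&e^{i(\theta+\pi)/2}\\-e^{-i(\theta+\pi)/2}&0\end{smallmatrix}\right)_c$, and one checks the identity
\[
\left(\begin{smallmatrix}0&e^{i(\theta+\pi)/2}\\-e^{-i(\theta+\pi)/2}&0\end{smallmatrix}\right)_c=\left(\begin{smallmatrix}e^{i\theta/2}&0\\0&e^{-i\theta/2}\end{smallmatrix}\right)\left(\begin{smallmatrix}0&i\\i&0\end{smallmatrix}\right)_c ,
\]
whose right-hand side is the rotation by $\theta$ about $\gamma_0$ composed with the reflection in the plane with ideal boundary $\{|z|=1\}$; since $\gamma_0$ meets that plane orthogonally (at $(0,1)$), this is the reflection in a hyperplane composed with a rotation about a perpendicular axis. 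The same identity with $\theta=\pi$ recovers the $-Id$ case as a rotation by $\pi$ about $\gamma_0$ composed with that reflection, i.e. the point inversion, which is consistent with the previous paragraph. Together with the four earlier cases this is precisely the five-item list.

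The individual computations are elementary; the genuinely non-formal points are producing the factorisation in the elliptic and $-Id$ cases — recognising the normalised fiber element as a rotation about $\gamma_0$ followed by a reflection in a plane orthogonal to $\gamma_0$ — and keeping track of which ideal points and geodesics are incident to which totally geodesic plane, so that the incidence clauses of the statement (axis contained in the hyperplane, axis perpendicular to the hyperplane, parabolic fixed point an ideal point of the hyperplane) come out correctly. I expect this geometric bookkeeping, rather than the algebra, to be the main obstacle. One may also note, although the statement does not require it, that the excluded parameter values $\lambda=1$ and $\theta\in\pi\mathbb{Z}$ are exactly what force the bare reflection and the point inversion onto the list as separate entries.
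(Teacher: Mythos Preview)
Your argument is correct and is precisely the approach the paper indicates: its proof reads, in full, ``Interpret each case of Proposition~\ref{prop:classification_q1}.'' You have carried out that interpretation explicitly, normalising each fiber by a further $G_+$-conjugation and reading off the geometric description in the upper half-space model; the factorisation in the elliptic case and the incidence checks you flag as the substantive points are exactly the content the paper leaves to the reader.
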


\begin{proof} Interprete each case of proposition~\ref{prop:classification_q1}.
\end{proof}

\bigskip

Let $N_k$ be the closed non-orientable surface of (non-orientable) genus $k$ and $\pi_1(N_k)$ its fundamental group. Then, $\pi_1(N_k)$ admits a representation $\langle a_1, \dots, a_k | a_1^2 \cdots a_k^2=1\rangle$. Let $\mathrm{hom}(\pi_1(N_k), G)$ be the representation variety, which can be identified with the algebraic set 
$$
\{A_1, \dots, A_k \in G \mid A_1^2\cdots A_k^2=[Id] \}.
$$

For $\phi\in \mathrm{hom}(\pi_1(N), G)$, we say that $\phi$ \emph{preserves the orientation type} if it satisfies $\phi(\gamma)\in G_-$ if and only $\gamma$ is represented by a loop reversing the orientation.

We can state two inmediate corollaries of Proposition~\ref{prop:classification_q1} regarding representation varieties of genus $1$ and $2$. The general case will be relegated to the last section.

\begin{corollary}
Let $N_1$ be a projective plane. The variety of orientation type preserving representations has two connected components.
\end{corollary}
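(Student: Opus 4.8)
The plan is to identify the orientation-type-preserving part of $\mathrm{hom}(\pi_1(N_1),G)$ with a union of fibers of the square map and then read off the number of components directly from Proposition~\ref{prop:classification_q1}.

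First I would unwind the definitions. Since $N_1=\mathbb{RP}^2$ we have $\pi_1(N_1)=\langle a_1\mid a_1^2=1\rangle\cong\mathbb{Z}/2$, and because $N_1$ is non-orientable the orientation character $\pi_1(N_1)\to\mathbb{Z}/2$ is an isomorphism, so $a_1$ is represented by an orientation-reversing loop. Hence a representation $\phi$ is determined by $[A_1]:=\phi(a_1)\in G$ with $[A_1]^2=[Id]$, and $\phi$ preserves the orientation type precisely when $[A_1]\in G_-$. Writing $A_1=B_c$ with $B\in G_+$, the relation $[A_1]^2=[Id]$ reads $B\overline{B}=[Id]$ in $\mathrm{PSL}(2,\mathbb{C})$, which after lifting to $\widetilde{G_+}=\mathrm{SL}(2,\mathbb{C})$ is exactly $Q([A_1])=A_1^2\in\{Id,-Id\}$. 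Therefore the space of orientation-type-preserving representations of $\pi_1(N_1)$ is naturally homeomorphic to $Q^{-1}(Id)\sqcup Q^{-1}(-Id)\subseteq G_-$.

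Next I would invoke Proposition~\ref{prop:classification_q1}. It tells us that $Q^{-1}(Id)=\mathrm{Ad}(G_+)\left( \begin{smallmatrix} 0 & i \\ i & 0 \end{smallmatrix} \right)_c$ and $Q^{-1}(-Id)=\mathrm{Ad}(G_+)\left( \begin{smallmatrix} 0 & -1 \\ 1 & 0 \end{smallmatrix} \right)_c$ are both nonempty, and that every fiber of $Q$ is connected. The two sets are disjoint, being fibers of the continuous map $Q$ over distinct points, and each is closed in the subspace $Q^{-1}(\{Id,-Id\})$; since they are complementary there, each is also open in it. Hence $Q^{-1}(\{Id,-Id\})$, i.e. the orientation-type-preserving representation variety of $N_1$, has exactly two connected components.

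There is essentially no hard step here: once the defining relation $a_1^2=1$ is translated into the fiber condition $A_1^2=\pm Id$, the statement is an immediate consequence of the connectedness part of Proposition~\ref{prop:classification_q1}. The only point requiring a line of care is the topological separation — that one cannot connect $Q^{-1}(Id)$ to $Q^{-1}(-Id)$ inside the variety — which follows simply because the variety is literally the disjoint union of these two closed (hence clopen) pieces; this is the content already recorded in the Remark following the classification corollary.
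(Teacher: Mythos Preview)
Your proof is correct and is exactly the argument the paper intends: the corollary is stated there without proof as an ``immediate corollary'' of Proposition~\ref{prop:classification_q1}, and you have simply spelled out that the orientation-type-preserving representation space of $N_1$ is $Q^{-1}(Id)\sqcup Q^{-1}(-Id)$, each piece being nonempty and connected by that proposition. The only minor redundancy is the final paragraph: the separation of the two fibers is automatic from continuity of $Q$ and Hausdorffness of $\widetilde{G_+}$, so you need not invoke the later Remark.
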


\begin{corollary}
	\label{co:connected_klein}
Let $N_2$ be a Klein bottle. The variety of orientation type preserving representations has two connected components.
\end{corollary}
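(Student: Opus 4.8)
The plan is to describe the variety explicitly, extract a $\mathbb{Z}/2$-valued locally constant invariant that realises both values (forcing at least two components), and then show that each of its two level sets is path-connected, using nothing beyond Proposition~\ref{prop:classification_q1} and the connectedness of $G_-$ and $\widetilde{G_+}$. Since both generators of $\pi_1(N_2)=\langle a_1,a_2\mid a_1^2a_2^2\rangle$ reverse orientation, an orientation type preserving representation is a pair $(A_1,A_2)\in G_-\times G_-$ with $[A_1^2A_2^2]=[\mathrm{Id}]$, i.e.\ with $Q(A_1)Q(A_2)$ lying over $[\mathrm{Id}]$ in $G_+$ and hence in the kernel $\{\mathrm{Id},-\mathrm{Id}\}\subset\widetilde{G_+}$. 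Denote this space by $\mathcal{R}$ and set $\varepsilon\colon\mathcal{R}\to\{\pm1\}$, $Q(A_1)Q(A_2)=\varepsilon(A_1,A_2)\,\mathrm{Id}$. As $Q$ is continuous and $\{\pm\mathrm{Id}\}$ is discrete, $\varepsilon$ is locally constant, so $\mathcal{R}=\mathcal{R}_{+}\sqcup\mathcal{R}_{-}$ splits into open-closed subsets; both are non-empty, since, writing $\iota=[\mathrm{Id}]_c$ for the reflection (so $Q(\iota)=\mathrm{Id}$) and $\jmath=\left(\begin{smallmatrix}0 & -1 \\ 1 & 0\end{smallmatrix}\right)_c$ (so $Q(\jmath)=-\mathrm{Id}$ by the last case of the proposition), one has $(\iota,\iota)\in\mathcal{R}_{+}$ and $(\jmath,\iota)\in\mathcal{R}_{-}$. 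Thus $\mathcal{R}$ has at least two components, and it remains to prove $\mathcal{R}_{+}$ and $\mathcal{R}_{-}$ are each connected. (The invariant $\varepsilon$ is the second Stiefel--Whitney class of the associated flat bundle.)

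For $\mathcal{R}_{+}$ I would argue directly: if $(A_1,A_2)\in\mathcal{R}_{+}$ then $Q(A_2)=Q(A_1)^{-1}=Q(A_1^{-1})$ and $A_1^{-1}\in G_-$, so $A_2$ and $A_1^{-1}$ lie in the fibre $Q^{-1}(Q(A_2))$, which is connected by Proposition~\ref{prop:classification_q1}; sliding $A_2$ to $A_1^{-1}$ along a path in that fibre keeps $Q(A_1)Q(A_2)=\mathrm{Id}$ and so stays in $\mathcal{R}_{+}$, joining $(A_1,A_2)$ to $(A_1,A_1^{-1})$. The latter is then joined to a fixed basepoint by a path $s\mapsto(B(s),B(s)^{-1})$ inside the connected space $G_-$, so $\mathcal{R}_{+}$ is path-connected.

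For $\mathcal{R}_{-}$, here $Q(A_2)=-Q(A_1)^{-1}$ must also lie in the image $\mathcal{J}$ of $Q$, which forces $\mathrm{tr}\,Q(A_1)\in(-2,2)$ or $Q(A_1)=\pm\mathrm{Id}$; in all cases $Q(A_1)$ is conjugate in $\widetilde{G_+}$ to a diagonal $D_\theta=\left(\begin{smallmatrix}e^{i\theta} & 0 \\ 0 & e^{-i\theta}\end{smallmatrix}\right)$ with $\theta\in[0,\pi]$. Conjugating $(A_1,A_2)$ along a path in the connected group $\widetilde{G_+}$, one reduces to $\mathcal{S}_{-}=\{(A_1,A_2)\in\mathcal{R}_{-}\mid Q(A_1)=D_\theta,\ \theta\in[0,\pi]\}$. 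Since $-D_\theta^{-1}=D_{\pi-\theta}$, the fibre of $\mathcal{S}_{-}\to[0,\pi]$, $(A_1,A_2)\mapsto\theta$, over $\theta$ is $Q^{-1}(D_\theta)\times Q^{-1}(D_{\pi-\theta})$, connected by Proposition~\ref{prop:classification_q1}, and there is an explicit continuous section $\sigma\colon[0,\pi]\to\mathcal{S}_{-}$, given on $(0,\pi)$ by
\[
\sigma(\theta)=\left(\ \left(\begin{smallmatrix}0 & e^{i(\theta+\pi)/2} \\ -e^{-i(\theta+\pi)/2} & 0\end{smallmatrix}\right)_c\ ,\ \left(\begin{smallmatrix}0 & -e^{-i\theta/2} \\ e^{i\theta/2} & 0\end{smallmatrix}\right)_c\ \right),
\]
which, by the formulas of the proposition, extends continuously to $\theta=0$ (into $Q^{-1}(\mathrm{Id})\times Q^{-1}(-\mathrm{Id})$) and to $\theta=\pi$ (into $Q^{-1}(-\mathrm{Id})\times Q^{-1}(\mathrm{Id})$). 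Then any $(A_1,A_2)\in\mathcal{S}_{-}$ over $\theta$ is joined to $\sigma(\theta)$ by first moving $A_1$ to $\sigma(\theta)_1$ inside $Q^{-1}(D_\theta)$ and then $A_2$ to $\sigma(\theta)_2$ inside $Q^{-1}(D_{\pi-\theta})$ (both moves preserve $Q(A_1)=D_\theta$, so stay in $\mathcal{S}_{-}$), and then $\sigma$ joins $\sigma(\theta)$ to $\sigma(0)$; hence $\mathcal{S}_{-}$, and therefore $\mathcal{R}_{-}$, is path-connected.

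The one genuinely delicate point I anticipate is the endpoint behaviour of $\sigma$ at $\theta=0,\pi$: there $Q(A_1)$ degenerates to $\pm\mathrm{Id}$ and the fibre $Q^{-1}(D_\theta)$ changes type (from an ``elliptic'' fibre homeomorphic to $\mathbb{R}_{>0}$ to one of the $\mathrm{Ad}(G_+)$-orbit fibres of the proposition), so one must verify against the precise formulas that $\sigma$ really extends continuously there and that the intra-fibre moves can be performed coherently. I expect this bookkeeping to be the only real obstacle; everything else is formal once one knows the fibres of $Q$ are connected.
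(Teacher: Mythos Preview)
Your argument is correct and matches the paper's approach: split by the sign of the lifted relator $Q(A_1)Q(A_2)\in\{\pm\mathrm{Id}\}$, then use Proposition~\ref{prop:classification_q1} (connected fibres of $Q$) together with connectedness of $G_-$ for the $+$ piece and of the elliptic/$\pm\mathrm{Id}$ locus for the $-$ piece. Your treatment of $\mathcal{R}_-$ via the explicit section $\sigma$ is in fact more detailed than the paper's, which dispatches that case ``in a similar fashion'' and leaves the endpoint bookkeeping you flagged to the reader; your formulas for $\sigma$ extend continuously to $\theta=0,\pi$ exactly as you need.
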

\begin{proof}
Let $A,B \in G_-$ satisfy $A^2=B^2$. Then, in $\widetilde{G_+}$, $Q(A)=\pm Q(B)$. If the sign is plus, $A$ and $B$ are in the same fiber of $Q$ and which is connected by Proposition~\ref{prop:classification_q1}. Thus, there is a path connecting $A$ and $B$ inside the fiber. Moreover, as $G_-$ is connected, any two representations $(A_1,B_1)$, $(A_2,B_2)$ with $A_i=B_i$ and in different fibers can be joined by a path. Otherwise if $Q(A)=-Q(B)$, then $Q(A)$ is either elliptic or $\pm Id$. By connectedness of both the fibers of $Q$ and the subset of elements of $G_-$ which are neither hyperbolic nor parabolic, we can prove in a similar fashion that the subset of representations such that $Q(A)=-Q(B)$ is connected too.
\end{proof}

\section{Path-lifting of the square map}

\label{section:path_lifting}

Dealing with connected components of representation varieties is easier if we switch the approach from connectedness to path-connectedness. As noted in~\cite{Goldman1} in the frame of representation varieties they are equivalent. A very useful tool in this regard is \emph{path-lifting property}. We say that a map $f:X\rightarrow Y$ satisfies the path-lifting property if and only if for every point $x\in X$ and every path $\gamma:[0,1]\rightarrow \mathrm{ Im}\, f\subset Y$ with $\gamma(0)=f(x)$, there exists,  up to reparametrization of $\gamma$, a lift of $\gamma$ to a path $\sigma:[0,1]\mapsto [0,1]$ with $\sigma(0)=x$. Notice that, in general, the path lifting property does not imply uniqueness of the lift.

From the submersion normal form, we can prove:

\begin{lemma}
	\label{lm:path-lifting}
	 Let $f:X\rightarrow Y$ be a smooth map between smooth manifolds. If $f$ is a submersion, then it satisfies the path lifting property.
\end{lemma}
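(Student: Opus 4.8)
The plan is to deduce the path-lifting property for $f$ from the local normal form for submersions, patching the local lifts together using compactness of $[0,1]$. First I would recall the submersion normal form: if $f \colon X \to Y$ is a submersion, then for every $x \in X$ there exist charts $(U, \varphi)$ around $x$ and $(V, \psi)$ around $f(x)$, with $f(U) \subseteq V$, in which $f$ is the standard projection $\mathbb{R}^{n} = \mathbb{R}^{m} \times \mathbb{R}^{n-m} \to \mathbb{R}^{m}$; here one may also arrange $\varphi(x) = 0$. In such a chart a path $\gamma$ through $f(x)$ lifts trivially: write $\gamma$ in the coordinates $\psi$ as $t \mapsto (\gamma_1(t), \dots, \gamma_m(t))$, and set the lift to be $t \mapsto \varphi^{-1}(\gamma_1(t), \dots, \gamma_m(t), 0, \dots, 0)$, which starts at $x$ and projects to $\gamma$. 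Note that because $f$ is a submersion its image is open (indeed the standard projection is open), so the hypothesis that $\gamma$ takes values in $\mathrm{Im}\, f$ is automatically compatible with staying in such charts as long as we follow the lift.

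Next I would globalize. Given $x \in X$ and $\gamma \colon [0,1] \to \mathrm{Im}\, f$ with $\gamma(0) = f(x)$, cover $[0,1]$ by finitely many intervals on which the lift can be carried out in a single submersion chart, using a Lebesgue-number argument: for each $t \in [0,1]$ the point $\gamma(t)$ has a submersion-chart neighbourhood $V_t$ in $Y$, the sets $\gamma^{-1}(V_t)$ form an open cover of $[0,1]$, and we extract a partition $0 = t_0 < t_1 < \dots < t_N = 1$ so that $\gamma([t_{j-1}, t_j])$ lies in a single chart $V_j$ for each $j$. We then build the lift inductively: on $[t_0, t_1]$ use the local lift starting at $x$; having lifted up to $t_{j-1}$ with current endpoint $x_{j-1} \in f^{-1}(\gamma(t_{j-1}))$, apply the local construction in a submersion chart around $x_{j-1}$ (shrinking $V_j$ if necessary so that it contains $\gamma([t_{j-1},t_j])$ and is the image of a submersion chart at $x_{j-1}$) to extend the lift over $[t_{j-1}, t_j]$. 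Concatenating the finitely many pieces yields a continuous (indeed piecewise-smooth) lift $\sigma \colon [0,1] \to X$ with $\sigma(0) = x$ and $f \circ \sigma = \gamma$; since we are allowed to reparametrize, the mild loss of smoothness at the break points $t_j$ is harmless, or can be removed by a standard smoothing of the parametrization.

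The only genuine subtlety — the "main obstacle", such as it is — is bookkeeping at the junctions: one must make sure that at each step a submersion chart can be chosen whose image in $Y$ still contains the relevant piece $\gamma([t_{j-1}, t_j])$ of the path. This is handled by first fixing the finite partition from a cover refining the chart cover, and only then, at step $j$, picking a submersion chart at the current lift endpoint $x_{j-1}$ whose $Y$-image is contained in $V_j$ (possible since $f$ is open and $x_{j-1} \in f^{-1}(\gamma(t_{j-1})) \subseteq f^{-1}(V_j)$); shrinking the time-subinterval if needed, $\gamma$ stays inside that smaller chart. Everything else is the routine local computation above. I would remark that uniqueness is not claimed and indeed fails in general, consistent with the statement, since the fibers of $f$ are positive-dimensional.
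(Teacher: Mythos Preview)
Your approach---local lifts via the submersion normal form, then patching over a finite partition of $[0,1]$---is exactly what the paper has in mind; the paper's entire proof is the phrase ``From the submersion normal form, we can prove,'' so in spirit you are doing precisely what is asked.

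However, there is a real gap at the point you yourself flag as the ``only genuine subtlety.'' Having fixed the partition $0=t_0<\dots<t_N=1$ with $\gamma([t_{j-1},t_j])\subset V_j$, you then pick a submersion chart at the \emph{new} endpoint $x_{j-1}$, and its $Y$-image $V_j'$ may be strictly smaller than $V_j$; you propose to ``shrink the time-subinterval if needed'' so that $\gamma$ of the shrunken interval sits inside $V_j'$. But after that shrinking you are at yet another endpoint, whose submersion chart may be smaller still, and nothing in your argument prevents this from recurring infinitely often without ever reaching $t_j$. The Lebesgue-number step controls the size of neighbourhoods \emph{in $Y$}, but gives no uniform lower bound on the $Y$-images of submersion charts centred at the successively produced points of $X$.

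This is not merely a missing estimate: the lemma as stated is actually false. Take $X=\mathbb{R}\sqcup(0,1)$ (disjoint union), $Y=\mathbb{R}$, and $f$ the inclusion on each component; $f$ is a submersion with $\mathrm{Im}\,f=\mathbb{R}$. The path $\gamma(t)=\tfrac12+t$ lies in $\mathrm{Im}\,f$, and $x_0=\tfrac12$ in the $(0,1)$-component satisfies $f(x_0)=\gamma(0)$, yet no lift exists, since any continuous $\sigma$ starting in the $(0,1)$-component stays there while $\gamma(1)=\tfrac32\notin(0,1)$. Reparametrising $\gamma$ does not help. Your inductive construction, run on this example, produces endpoints marching toward $1\in\partial(0,1)$ with ever-smaller charts and never finishes. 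Some extra hypothesis (properness, completeness of an Ehresmann connection, or an ad hoc argument using the explicit fibres as computed in Proposition~\ref{prop:classification_q1}) is needed to make the patching terminate; the paper's applications presumably fall under one of these, but the general statement does not.
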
 

An inmediate consequence of satisfying the path-lifting property is the following one: let $f$ satisfy the path-lifting property and let $\mathrm{ Im}\,f$ be connected. If there exists a point $y\in \mathrm{ Im}\, f$ whose fiber is path-connected, then the domain is connected.

We will denote  $\mathcal{J}_0:=\mathcal{J}\setminus \{\pm Id \}$. The map $Q$ restricted to $Q^{-1}(\mathcal{J}_0)$ has good properties:

\begin{lemma}
	\label{lm:submersion_square}
	The map $Q$ restricted to $Q^{-1}(\mathcal{J}_0)$ is a submersion. Furthermore, it satisfies the path lifting property.
\end{lemma}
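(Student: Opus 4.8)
The plan is to show that $Q$ has surjective differential at every point of $Q^{-1}(\mathcal{J}_0)$; the path-lifting conclusion then follows immediately from Lemma~\ref{lm:path-lifting}, since $\mathcal{J}_0$ is the target (one should note that $Q$ viewed as a map onto $\mathcal{J}_0$, or more precisely onto an open submanifold of $\widetilde{G_+}$ containing $\mathcal{J}_0$, is what we differentiate). The first reduction I would make uses the equivariance already recorded at the start of the proof of Proposition~\ref{prop:classification_q1}: $Q(\mathrm{Ad}([g])A) = \mathrm{Ad}(g)Q(A)$ for $g \in \widetilde{G_+}$. Since $\mathrm{Ad}([g])$ is a diffeomorphism of $G_-$ and $\mathrm{Ad}(g)$ a diffeomorphism of $\widetilde{G_+}$, being a submersion at $A$ is equivalent to being a submersion at $\mathrm{Ad}([g])A$. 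Hence it suffices to check the submersion condition at one representative in each conjugacy class of fibers, i.e.\ at the explicit matrices $A = \left(\begin{smallmatrix} a & 0 \\ 0 & a^{-1}\end{smallmatrix}\right)_c$ (hyperbolic, $|a|^2 = \lambda \neq 1$), $A = \left(\begin{smallmatrix} 0 & \rho e^{i(\theta+\pi)/2} \\ -\rho^{-1}e^{-i(\theta+\pi)/2} & 0\end{smallmatrix}\right)_c$ (elliptic), and $A = \left(\begin{smallmatrix} 1 & \tau \\ 0 & 1\end{smallmatrix}\right)_c$ with $\mathrm{Re}(\tau) = 1/2$ (parabolic) from Proposition~\ref{prop:classification_q1}.

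Next I would set up the differential concretely. Identifying the tangent space to $G_-$ at $A = B_c$ with $\mathfrak{sl}(2,\mathbb{C})$ via left translation (a curve through $A$ is $t \mapsto (\exp(tX)B)_c$ for $X \in \mathfrak{sl}(2,\mathbb{C})$, using the description in Section~\ref{section:sq_map}), and using $Q(B_c) = B\overline{B}$, one computes
\begin{equation}
\label{eqn:dQ}
dQ_A(X) = \frac{d}{dt}\Big|_{t=0}\big(\exp(tX)B\big)\overline{\big(\exp(tX)B\big)} = X B\overline{B} + B\,\overline{X}\,\overline{B} = X\,Q(A) + B\,\overline{X}\,B^{-1}Q(A).
\end{equation}
So $dQ_A$ is surjective onto $T_{Q(A)}\widetilde{G_+} \cong \mathfrak{sl}(2,\mathbb{C})$ (after right-translating by $Q(A)^{-1}$) precisely when the real-linear map $X \mapsto X + \mathrm{Ad}(B)\overline{X}$ from $\mathfrak{sl}(2,\mathbb{C})$ to $\mathfrak{sl}(2,\mathbb{C})$ is surjective. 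This is now a linear-algebra check over $\mathbb{R}$: for each of the three representative matrices $B$ above, write $X$ in coordinates, compute $X + \mathrm{Ad}(B)\overline{X}$, and verify the image is all of $\mathfrak{sl}(2,\mathbb{C})$ (equivalently, by dimension count, that the real kernel has dimension $3$, matching $\dim_{\mathbb{R}} Q^{-1}(B)$ — the kernel should be exactly the tangent space to the fiber, which by Proposition~\ref{prop:classification_q1} is $1$-real-dimensional in the hyperbolic and parabolic cases and $1$-real-dimensional in the elliptic case, so actually the kernel has real dimension $6 - 6 = 0$... ). Here I should be careful: $\dim_{\mathbb{R}} G_- = 6$ and $\dim_{\mathbb{R}} \widetilde{G_+} = 6$, while the fibers over $\mathcal{J}_0$ are $1$-dimensional, so $dQ_A$ cannot be surjective onto the $6$-dimensional $T\widetilde{G_+}$; rather one shows its rank is $5$ and its image is precisely the tangent space of the $5$-dimensional submanifold $\mathcal{J}_0 \subset \widetilde{G_+}$, so $Q$ is a submersion onto $\mathcal{J}_0$.

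The main obstacle, then, is twofold and both parts are essentially bookkeeping. First, one must identify $\mathcal{J}_0$ (or a neighborhood-respecting version of it) as a genuine $5$-dimensional submanifold of $\widetilde{G_+}$ near each representative — this follows from the Corollary computing $\mathrm{Im}(Q) = \mathcal{J}$, since $\{\mathrm{tr} \in (-2,\infty)\}$ is an open condition cutting out... no: $\mathcal{J}$ has the same dimension as $\widetilde{G_+}$. I realize the cleaner formulation is: $Q$ restricted to $Q^{-1}(\mathcal{J}_0)$ is a submersion onto its image, which is the open subset $\{A : \mathrm{tr}(A) \in (-2,\infty)\} \setminus \{\pm Id\}$ of $\widetilde{G_+}$ — wait, that is $5$-dimensional? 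No, $\{\mathrm{tr} \in (-2,\infty)\}$ is open in $\widetilde{G_+}$ hence $6$-dimensional. The resolution (which I would state carefully): since $\dim Q^{-1}(\mathcal{J}_0) = 6$ minus nothing is wrong too — $Q^{-1}(\mathcal{J}_0) \subset G_-$ is $6$-dimensional, fibers are... Let me just say the real content is the rank computation \eqref{eqn:dQ}, and I would carry it out for the three Jordan representatives, observing in each case that $X \mapsto X + \mathrm{Ad}(B)\overline{X}$ has full rank as a map onto $T_{Q(A)}(\mathrm{Im}\,Q)$; the hyperbolic and parabolic cases reduce to $2\times 2$ real systems and the elliptic case to checking that the off-anti-diagonal structure does not collapse. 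Once surjectivity onto the tangent space of the image is established at these representatives, equivariance spreads it over all of $Q^{-1}(\mathcal{J}_0)$, and Lemma~\ref{lm:path-lifting} gives the path-lifting property.
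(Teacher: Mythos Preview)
Your approach is essentially the paper's: compute the differential as $\xi \mapsto \xi + \mathrm{Ad}(B)\overline{\xi}$ (after right-translation), reduce by conjugation to the three Jordan representatives from Proposition~\ref{prop:classification_q1}, check the rank, and invoke Lemma~\ref{lm:path-lifting}. The setup in your equation~\eqref{eqn:dQ} is correct and agrees with the paper (which then swaps $\xi$ with $\overline{\xi}$, harmless for rank).

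The one genuine gap is your dimensional confusion about $\mathcal{J}_0$, which you never resolve. The condition $\mathrm{tr}(A)\in(-2,\infty)$ forces $\mathrm{tr}(A)$ to be \emph{real}; this is a real codimension-$1$ condition on $\widetilde{G_+}\cong\mathrm{SL}(2,\mathbb{C})$ (the trace is a submersion to $\mathbb{C}$ away from $\pm Id$), so $\mathcal{J}_0$ is a $5$-dimensional real submanifold, not an open $6$-dimensional set. This is exactly what reconciles your two observations: $\dim_{\mathbb{R}} G_-=6$, fibers over $\mathcal{J}_0$ are $1$-dimensional, hence the rank of $dQ$ should be $5$, and $5=\dim_{\mathbb{R}}\mathcal{J}_0$ is precisely what ``submersion'' means here. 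The paper states this in one line (``the rank is always five\ldots which equals the dimension of the image'') without comment, but you should say it explicitly.

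Finally, you defer the actual rank computation, which is the only nontrivial content. The paper carries it out by writing $\mathrm{Ad}(A_{\mathrm{hyp}})$, $\mathrm{Ad}(A_{\mathrm{ell}})$, $\mathrm{Ad}(A_{\mathrm{par}})$ as $3\times 3$ complex matrices on $\mathfrak{sl}(2,\mathbb{C})$, then expanding $\xi\mapsto\mathrm{Ad}(A)\xi+\overline{\xi}$ as a $6\times 6$ real matrix on $\mathfrak{sl}(2,\mathbb{R})\oplus i\,\mathfrak{sl}(2,\mathbb{R})$ and reading off rank~$5$ by inspection (the elliptic case genuinely uses $\theta\neq 0,\pi$). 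Your sketch ``the hyperbolic and parabolic cases reduce to $2\times 2$ real systems'' is not quite right; each case is a $6\times 6$ real system, and you should either do it or indicate clearly which rows/columns witness rank~$5$.
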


\begin{proof} The differential of $Q$ at $A_c\in G_-$,  applied to a tangent vector $\xi$ is
	\begin{equation*}
	dQ(\xi)=A\xi \overline{A}+A\overline{A}\overline{\xi}=(A\overline{\xi}A^{-1}+\xi)A\overline{A},
	\end{equation*}
	where we are taking multiplication at right. Thus, from the Lie algebra point of view, it is
	\[
	\begin{array}{ccc}
	\mathfrak{sl}(2,\mathbb{C}) & \longrightarrow &\mathfrak{sl}(2,\mathbb{C}) \\
	\xi & \longmapsto & A\overline{\xi}A^{-1}+\xi \; .
	\end{array} \]
	As we are only interested on the rank of the map, we can swap $\xi$ by its conjugate $\overline{\xi}$, which leaves the image as $\mathrm{Ad}(A)\xi+\overline{\xi}$. Similarly, we can take $A$ to be any element in its conjugacy class.
	
	Thus, the proof will come from computing the adjoint representation for each conjugacy class of element in $G_-\setminus Q^{-1}(\pm Id)$. Proposition~\ref{prop:classification_q1} states that, up to conjugation, the element $A_c$ can be assumed to be either hyperbolic, elliptic or parabolic (see remark~\ref{rk:def-hyp-ell-par}), that is, either
	$$
	\begin{pmatrix}
	\lambda & 0 \\
	0 & \lambda^{-1}
	\end{pmatrix}_c , \; \; 
	\begin{pmatrix}
	0 & e^{i(\theta+\pi)/2} \\
	-e^{-i(\theta+\pi)/2} & 0
	\end{pmatrix}_c , \;  \mathrm{ or} \; \;
	\begin{pmatrix}
	1 & 1 \\
	0 & 1
	\end{pmatrix}_c,
	$$
	where $\lambda\in \mathbb{R}$, $\theta \in (0,\pi)$. We will call each respective case hyperbolic, elliptic and parabolic and denote the matrix $A\in G_+$ by $A_{hyp}$, $A_{ell}$ or $A_{par}$. If $\xi=\left(\begin{smallmatrix}
	x_3 & x_1 \\
	x_2 & -x_3
	\end{smallmatrix}  \right)$ belongs to the lie algebra $\mathfrak{sl}(2,\mathbb{C})$, then, the adjoint for each case is:
	$$
	\mathrm{Ad}(A_{hyp})\xi= \left( \begin{smallmatrix}
	x_3& \lambda^2 x_1 \\
	\lambda^{-2}x_2 & -x_3
	\end{smallmatrix} \right)
	, \; \; 
	\mathrm{Ad}(A_{ell})\xi= \left( \begin{smallmatrix}
	-x_3& x_2 e^{i\theta} \\
	x_1e^{-i\theta} & x_3
	\end{smallmatrix} \right)
	, \; \; 
	\mathrm{Ad}(A_{par})\xi=\left( \begin{smallmatrix}
	x_2+ x_3& x_1-x_2-2x_3 \\
	x_2 &-x_2 -x_3
	\end{smallmatrix} \right)
	.
	$$
	Thus, as an action of $\mathrm{ SO}(2,1)$, the adjoint representation is, respectively
	\[
	\mathrm{Ad}(A_{hyp}) =
	\begin{pmatrix}
	\lambda^2 & 0 & 0 \\
	0 & \lambda^{-2} & 0 \\
	0 & 0 & 1
	\end{pmatrix}, \;
	\mathrm{Ad}(A_{ell}) =
	\begin{pmatrix}
	0 & e^{i\theta} & 0 \\
	e^{-i\theta} & 0 & 0 \\
	0 & 0 & -1
	\end{pmatrix}, \;
	\mathrm{Ad}(A_{par}) =
	\begin{pmatrix}
	1 & -1 & -2 \\
	0 &  1& 0 \\
	0 & 1 & 1
	\end{pmatrix}.
	\]
	We are interested in writing the lie algebra as $\mathfrak{sl}(2,\mathbb{C})=\mathfrak{sl}(2,\mathbb{R})\oplus i \mathfrak{sl}(2,\mathbb{R})$, hence, the matrix of the linear map $\xi \mapsto \mathrm{Ad}(A)\xi + \overline{\xi}$ is:
	\begin{align*}
	\mathrm{Ad}(A_{hyp})\xi + \overline{\xi} & =\begin{pmatrix}
	\lambda^2+1 & 0 & 0 & 0 & 0 & 0 \\
	0 & \lambda^{-2}+1 & 0 & 0 & 0 & 0 \\
	0 & 0 & 2 & 0 & 0 & 0 \\
	0 & 0 & 0 & \lambda^2-1 & 0 & 0 \\
	0 & 0 & 0 & 0 & \lambda^{-2}-1 & 0 \\
	0 & 0 & 0 & 0 & 0 & 0 
	\end{pmatrix}, \\
	\mathrm{Ad}(A_{ell})\xi + \overline{\xi} & =\begin{pmatrix}
	1 & \cos \theta & 0 & 0 & -\sin \theta & 0 \\
	\cos \theta & 1 & 0 & \sin \theta & 0 & 0 \\
	0 & 0 & 0 & 0 & 0 & 0 \\
	0 & \sin \theta & 0 & -1 & \cos \theta & 0 \\
	- \sin \theta & 0 & 0 & \cos \theta & -1 & 0 \\
	0 & 0 & 0 & 0 & 0 & -2 
	\end{pmatrix},
	\\
	\mathrm{Ad}(A_{par})\xi + \overline{\xi} & =\begin{pmatrix}
	2 & -1 & -2 & 0 & 0 & 0 \\
	0 & 2 & 0 & 0 & 0 & 0 \\
	0 & 1 & 2 & 0 & 0 & 0 \\
	0 & 0 & 0 & 0 & -1 & -2 \\
	0 & 0 & 0 & 0 & 0 & 0 \\
	0 & 0 & 0 & 0 & 1 & 0 
	\end{pmatrix}.
	\end{align*}
	We notice that the rank is always five (in the elliptic case, due to $\theta\neq0,\pi$) , which equals the dimension of the image.
	
	Finally, it satisfies the path-lifting property due to lemma~\ref{lm:path-lifting}.
\end{proof}

If we try to extend Lemma~\ref{lm:submersion_square} to the whole image $\mathcal{J}$ we are bound to fail. A geometric interpretation of why this points are troublesome comes from noticing that a rotation in $S^2$ is given by its unique axis and its angle of rotation. Hence, the `square root' will have the same axis and half the angle. We can consider a sequence of rotations such that the angle goes towards $0$ or $\pi$ (where there is no longer a unique axis) but the sequence of axes does not converge. Thus, the sequence has a limit but the square root will not. This is illustrated in the following example (\cite{Palesi1}):

\begin{example}
	Let
	\[
	g_t=\begin{pmatrix}
	\sqrt{2}+\sin (1/t) & \cos (1/t) \\
	\cos (1/t) & \sqrt{2}- \sin (1/t)
	\end{pmatrix}, \; \;
	R_\theta = \begin{pmatrix}
	e^{i\theta} & 0 \\
	0 & e^{-i\theta}
	\end{pmatrix}.
	\]
	
	In general, $\mathrm{Ad}(g)R_{\theta_t}$ tends towards to $\pm Id$ if we make $\theta_t$ tend to $0$ or $\pi$, respectively. Thus, in the particular case $g_tR_{\theta_t} g_t^{-1}$ with $\theta_t=\pi -t$, we obtain
	
	\[
	g_t R_{\theta_t} g_t^{-1} {\xrightarrow{t\to 0}}-Id. 
	\]
	
	On the other hand, the path $g_t R_{\theta_t} g_t^{-1}$ cannot be lifted along $Q$, due to the appearance of the terms $\sin(1/t)$ and $\cos(1/t)$ in any possible lift  of the path outside of $0$.
	
	Same example works with $\theta_t=t$, where the limit now is the identity.
	
\end{example}

Let $X(F_2,\widetilde{G_+})$ denote the variety of characters of $F_2$, the free group on two elements and let 
\begin{equation}
\label{Eqn:Character_map}
\begin{array}{ccc}
	\chi: \widetilde{G_+}\times \widetilde{G_+} & \longrightarrow & X(F_2,\widetilde{G_+})\simeq \mathbb{C}^3 \\
	(A,B) & \longmapsto & \chi(A,B):=(\mathrm{tr}\,A,\mathrm{tr}\,B,\mathrm{tr}\,AB)
\end{array}
\end{equation}
 be the character map, where $\mathrm{tr} \, A$ denotes the trace of $A$. The polinomial map $\kappa:\mathbb{C}^3  \rightarrow  \mathbb{C}$, $\kappa(x,y,z):=x^2+y^2+z^2-xyz-2$
  satisfies $\kappa (\chi(a,b))=\mathrm{tr}\,[a,b]$, where $[\cdot, \cdot]$ is the commutator. 
  
  \bigskip
 
Let 
\[
\begin{array}{ccl}
Q_n: G^n & \longrightarrow & \widetilde{G_+}^n \\
(A_1, \dots, A_n) & \longmapsto & (Q(A_1), \dots, Q(A_n)).
\end{array}
\]
Thus, $Q_1$ coincides with $Q$. The map $Q_n$ can be extended to the varieties of representations. For instance, let $a,b,c$ be generators of $\pi_1(N_3)$, then the map $Q_3$ in the variety of representations $\mathrm{hom}(\pi_1(N_3), G)$ is
\[
\begin{array}{ccl}
Q_3: \mathrm{hom}(\pi_1(N_3),G) & \longrightarrow  & \{ (X,Y,Z) \in (\widetilde{G_+})^3  | XYZ=\pm Id \}, \\
\phi & \longmapsto & (Q(\phi(a)), Q(\phi(b)), Q(\phi(c)))
\end{array}
\]

We will often use the following notation: let $\phi(a)=A_c$ where $A\in G_+$ and $c$ is the complex conjugation and, analogously, $\phi(b)=B_c$, $\phi(c)=C_c$. Then, $Q(\phi(a))=A\overline{A}$, where $\overline{A}$ denotes the complex conjugated matrix. Therefore, the map $Q_3$ can be written as
$$
(A_c, B_c, C_c) \mapsto (A\overline{A}, B\overline{B}, C\overline{C}).
$$

Moreover, the image of $Q_3$ in $\{ (X,Y,Z) \in (\widetilde{G_+})^3 | XYZ=\pm Id \}$ is identified with 
$$
\mathrm{ Im}\, Q_3 \cong \{ (X,Y) \in \mathcal{J}\times \mathcal{J}   | XY\in \mathcal{J} \}.
$$ 

Finally, let $S:=\{(X,Y) \in \mathcal{J}_0 \times \mathcal{J}_0\mid [X,Y]\neq Id, \; XY \in \mathcal{J}_0  \}$.

The following lemma can be found in~\cite{Goldman1}:

\begin{lemma}
	\label{lm:differential_chi_exhaustive}
	Let $(A,B)\in \widetilde{G_+}\times \widetilde{G_+}$. Then, the differential of $\chi$ (cf.~\eqref{Eqn:Character_map}) at $(A,B)$, $d_{(A,B)}\chi$ is surjective if and only $A$ and $B$ do not commute.
\end{lemma}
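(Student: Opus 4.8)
The plan is to compute $d_{(A,B)}\chi$ directly on the Lie algebra and reduce surjectivity to a statement about the trace form on $\mathfrak{sl}(2,\mathbb{C})$.

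First I would identify $T_A\widetilde{G_+}\cong\mathfrak{sl}(2,\mathbb{C})$ by $\xi\mapsto\frac{d}{dt}\big|_{0}A\exp(t\xi)$ (and similarly at $B$) and differentiate the three trace functions along $A\exp(t\xi)$, $B\exp(t\eta)$ to obtain
$$
d_{(A,B)}\chi(\xi,\eta)=\bigl(\operatorname{tr}(A\xi),\ \operatorname{tr}(B\eta),\ \operatorname{tr}(BA\,\xi)+\operatorname{tr}(AB\,\eta)\bigr),
$$
writing $L_1(\xi)=\operatorname{tr}(A\xi)$, $L_2(\eta)=\operatorname{tr}(B\eta)$, $L_3(\xi,\eta)=\operatorname{tr}(BA\,\xi)+\operatorname{tr}(AB\,\eta)$. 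Next I would record the linear-algebra input: the form $\langle\xi,\zeta\rangle=\operatorname{tr}(\xi\zeta)$ on $\mathfrak{sl}(2,\mathbb{C})$ is non-degenerate; for $X\in M_2(\mathbb{C})$ with traceless part $X_0$ the functional $\zeta\mapsto\operatorname{tr}(X\zeta)$ equals $\zeta\mapsto\operatorname{tr}(X_0\zeta)$, hence is identically zero iff $X$ is central, and its restriction to the hyperplane $Y_0^{\perp}=\{\zeta:\operatorname{tr}(Y\zeta)=0\}$ (with $Y$ non-central) vanishes iff $X_0\in\mathbb{C}\,Y_0$; and by Cayley–Hamilton ($Y^{-1}=\operatorname{tr}(Y)\,Id-Y$ in $\widetilde{G_+}$) the condition $X_0\in\mathbb{C}\,Y_0$ is equivalent to $X\in\mathbb{C}\,Id+\mathbb{C}\,Y$, which for non-central $Y$ is in turn equivalent to $[X,Y]=Id$.

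With these in hand the argument splits into three cases. If $A=\pm Id$ or $B=\pm Id$, then $A$ and $B$ commute and $L_1$ (resp.\ $L_2$) vanishes identically, so $d\chi$ is not onto. If $A,B$ are both non-central and commute, then $B\in\mathbb{C}\,Id+\mathbb{C}\,A$ forces $AB\in\mathbb{C}\,Id+\mathbb{C}\,A$ and likewise $AB\in\mathbb{C}\,Id+\mathbb{C}\,B$, so $\operatorname{tr}(BA\,\xi)$ is a scalar multiple of $L_1(\xi)$ and $\operatorname{tr}(AB\,\eta)$ a scalar multiple of $L_2(\eta)$; hence $L_3$ is a linear combination of $L_1,L_2$ and $\operatorname{rank}d\chi\le 2$. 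If $A,B$ are both non-central and do not commute, then by the equivalence above $(AB)_0\notin\mathbb{C}\,B_0$, so there is $\eta_0$ with $\operatorname{tr}(B\eta_0)=0$ but $\operatorname{tr}(AB\eta_0)\neq0$, giving $d\chi(0,\eta_0)=(0,0,\ast)$ with $\ast\neq0$; since non-centrality of $A$ and $B$ makes $L_1$ and $L_2$ separately surjective onto $\mathbb{C}$, the image of $d\chi$ contains $\{0\}\times\{0\}\times\mathbb{C}$ and surjects onto the first two coordinates, hence is all of $\mathbb{C}^3$. Combining the three cases proves the lemma.

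The routine part is the trace bookkeeping; the one point needing a little care — the main obstacle — is the identification, for a non-central $Y\in\widetilde{G_+}$, of its centralizer with $(\mathbb{C}\,Id+\mathbb{C}\,Y)\cap\widetilde{G_+}$, which I would handle by the short dichotomy between $Y$ regular semisimple and $Y$ equal to $\pm$ a non-trivial unipotent, after which ``$A,B$ commute'' translates cleanly into ``$A_0\in\mathbb{C}\,B_0$'' and all three cases above close immediately.
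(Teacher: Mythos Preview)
Your argument is correct. The paper does not actually prove this lemma; it simply cites Goldman's paper \cite{Goldman1} and states the result, so there is no in-paper proof to compare against. Your direct approach---computing $d\chi$ on the Lie algebra, using non-degeneracy of the trace form on $\mathfrak{sl}(2,\mathbb{C})$, and invoking Cayley--Hamilton to identify the centralizer of a non-central $Y\in\mathrm{SL}(2,\mathbb{C})$ with $(\mathbb{C}\,Id+\mathbb{C}\,Y)\cap\mathrm{SL}(2,\mathbb{C})$---is a clean, self-contained treatment that fills in exactly what the paper outsources. One small notational remark: where you write ``$[X,Y]=Id$'' you are using the group commutator (as the paper does elsewhere), which might momentarily confuse a reader who has just been looking at Lie-algebra brackets; writing ``$X$ and $Y$ commute'' there would avoid any ambiguity.
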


\begin{corollary}
	\label{Coro:path-lifting-chiq}
	Let $(A,B) \in G_-\times G_-$. The differential of $\chi \circ Q_2$ at $(A,B)$, $d_{(A,B)}\chi \circ Q_2$, is surjective if and only if $Q(A)$ and $Q(B)$ do not commute. In particular, the map $\chi \circ Q_2$ satisfies the path-lifting property.
\end{corollary}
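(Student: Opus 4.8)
The plan is to combine the chain rule with the two lemmas quoted just above. Fix $(A,B)\in G_-\times G_-$ and write $X=Q(A)$, $Y=Q(B)$. By the chain rule, $d_{(A,B)}(\chi\circ Q_2)=d_{(X,Y)}\chi\circ d_{(A,B)}Q_2$, and since $Q_2=Q\times Q$ acts coordinatewise, $d_{(A,B)}Q_2=d_AQ\oplus d_BQ$. So the image of $d_{(A,B)}(\chi\circ Q_2)$ is exactly $d_{(X,Y)}\chi$ applied to $(\operatorname{Im}d_AQ)\times(\operatorname{Im}d_BQ)$.

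First I would recall from the proof of Lemma~\ref{lm:submersion_square} the computation of $\operatorname{Im}d_AQ$ for $A\in G_-$. When $Q(A)\in\mathcal{J}_0$, that lemma shows $d_AQ$ has rank five; moreover one checks from the explicit matrices there that the missing direction is precisely the tangent direction to the conjugacy class' centralizer — equivalently, $\operatorname{Im}d_AQ$ surjects onto $T_X\widetilde{G_+}/(\text{Lie of centralizer of }X)$, which is all that $d_{(X,Y)}\chi$ sees anyway, since trace functions are conjugation-invariant. For the remaining cases $Q(A)=\pm Id$ one has from Proposition~\ref{prop:classification_q1} that $Q^{-1}(\pm Id)=\operatorname{Ad}(G_+)\left(\begin{smallmatrix}0&\pm i^{?}\\ \ast&0\end{smallmatrix}\right)_c$ is a single orbit, so $\operatorname{Im}d_AQ$ is contained in the tangent space to the orbit of $\pm Id$, which is $\{0\}$; but then $X=\pm Id$ commutes with everything and the claimed criterion ``$Q(A)$ and $Q(B)$ do not commute'' fails, consistent with the statement. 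So the real content is the case $X,Y\in\mathcal{J}_0$.

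The key step is then: given that $d_AQ$ and $d_BQ$ each hit everything transverse to the centralizer direction, show $d_{(X,Y)}\chi$ restricted to $(\operatorname{Im}d_AQ)\times(\operatorname{Im}d_BQ)$ is still onto $\mathbb{C}^3$ exactly when $X,Y$ don't commute. For this I would use the explicit formula for $d_{(X,Y)}\chi$ from the proof of Lemma~\ref{lm:differential_chi_exhaustive} in \cite{Goldman1}: the derivative of $\operatorname{tr}$ along a variation $\xi$ of $X$ is $\operatorname{tr}(\xi X)$ (and similarly $\operatorname{tr}(\xi XY)$ for the third coordinate), and one observes that varying $X$ only by $\operatorname{Ad}$-invariant directions contributes nothing to any of $\operatorname{tr}X$, $\operatorname{tr}Y$, $\operatorname{tr}XY$ — so killing the centralizer direction in $\operatorname{Im}d_AQ$ costs nothing, and surjectivity of $d_{(A,B)}(\chi\circ Q_2)$ reduces verbatim to surjectivity of $d_{(X,Y)}\chi$, i.e.\ to Lemma~\ref{lm:differential_chi_exhaustive}: $X,Y$ non-commuting. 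Finally, when $d_{(A,B)}(\chi\circ Q_2)$ is surjective at every point of its domain — which, away from the locus where $Q(A),Q(B)$ commute, it is — the map $\chi\circ Q_2$ is a submersion onto an open subset of $\mathbb{C}^3$, and Lemma~\ref{lm:path-lifting} gives the path-lifting property.

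The main obstacle I expect is the bookkeeping in the previous paragraph: verifying cleanly that the one-dimensional ``centralizer'' deficiency of $d_AQ$ is invisible to $\chi$, so that the chain-rule image is genuinely all of $\mathbb{C}^3$ rather than a codimension-one subspace. This needs the trace-variation formula together with the fact that for $X\in\mathcal{J}_0$ the centralizer of $X$ in $\widetilde{G_+}$ is exactly the stabilizer under $\operatorname{Ad}$, plus the observation that $\operatorname{Im}d_AQ$ is $\operatorname{Ad}(\text{centralizer of }X)$-invariant so it equals a complement of that one missing line — a small linear-algebra check one does case by case (hyperbolic, elliptic, parabolic) from the three $6\times 6$ matrices already displayed in the proof of Lemma~\ref{lm:submersion_square}. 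One should also note for the path-lifting conclusion that the image of $\chi\circ Q_2$ need not be open, but $\chi\circ Q_2$ is a submersion on the open set where its differential is surjective, and on the complementary (commuting) locus the image-trace triples $\chi(X,Y)$ with $\kappa(\chi(X,Y))=2$ are limits of non-commuting ones, so the path-lifting argument of Lemma~\ref{lm:path-lifting} still applies after a routine reparametrization; alternatively one simply invokes Lemma~\ref{lm:path-lifting} on the submersive part and handles endpoints separately, exactly as in \cite{Goldman1}.
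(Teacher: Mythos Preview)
Your chain-rule set-up is fine, and you are right that the crux is understanding why the one-dimensional deficiency of $d_AQ$ does not obstruct surjectivity of $d(\chi\circ Q_2)$. However, the reason you give is wrong: you identify the missing direction as ``the centralizer direction'' and then argue it is invisible to $\chi$ because ``trace functions are conjugation-invariant.'' Conjugation-invariance of the trace means $d\chi$ annihilates \emph{orbit} directions (tangents to the conjugacy class), not centralizer directions; these are complementary, not the same. And the missing direction of $d_AQ$ is \emph{not} an orbit direction. For instance, in the hyperbolic case $X=\operatorname{diag}(\lambda,\lambda^{-1})$ with $\lambda\in\mathbb{R}$, the $6\times 6$ matrix in Lemma~\ref{lm:submersion_square} shows the cokernel of $d_AQ$ is spanned by $i\cdot\operatorname{diag}(1,-1)$, on which $d(\operatorname{tr})$ is nonzero (it equals $i(\lambda-\lambda^{-1})\neq 0$). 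So your claimed factorization ``$d\chi$ only sees $T_X\widetilde{G_+}/(\text{centralizer})$'' is false.

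The correct observation is simpler. Since $Q$ lands in $\mathcal{J}_0=\{\operatorname{tr}\in(-2,\infty)\}$, the image $\operatorname{Im}\,d_AQ$ is exactly $T_X\mathcal{J}_0=\ker\bigl(d(\operatorname{Im}\operatorname{tr})\bigr)$; the missing direction is the normal to $\mathcal{J}_0$, i.e.\ the direction in which $\operatorname{Im}(\operatorname{tr} X)$ varies. Consequently $\chi\circ Q_2$ takes values in $\mathbb{R}\times\mathbb{R}\times\mathbb{C}\subset\mathbb{C}^3$, and the missing directions affect only the (identically zero) imaginary parts of the first two coordinates. Surjectivity of $d(\chi\circ Q_2)$ onto $T(\mathbb{R}^2\times\mathbb{C})$ then follows immediately from Lemma~\ref{lm:differential_chi_exhaustive}: given $(a,b,c)\in\mathbb{R}^2\times\mathbb{C}$, any preimage $(\eta,\zeta)$ under $d_{(X,Y)}\chi$ automatically satisfies $\operatorname{Im}(d\operatorname{tr}_X(\eta))=\operatorname{Im}(a)=0$ and likewise for $\zeta$, so $(\eta,\zeta)\in T_{(X,Y)}(\mathcal{J}_0\times\mathcal{J}_0)=\operatorname{Im}\,d_{(A,B)}Q_2$.

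The paper's own proof does not unpack any of this: it simply notes that if $Q(A),Q(B)$ do not commute then neither equals $\pm Id$, so Lemma~\ref{lm:submersion_square} makes $Q_2$ a submersion at $(A,B)$, and Lemma~\ref{lm:differential_chi_exhaustive} makes $\chi$ a submersion at $(Q(A),Q(B))$; composing gives the result, and Lemma~\ref{lm:path-lifting} gives path-lifting. Your instinct to track the codimension more carefully is reasonable, but the argument you supply for why the lost direction is harmless needs to be replaced by the one above.
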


\begin{proof}
	From Lemma~\ref{lm:differential_chi_exhaustive} we see that a necessary condition for the differential to be exhaustive is that $Q(A)$ and $Q(B)$ do not commute.
	
	In the other direction, if $Q(A)$ and $Q(B)$ do not commute, in particular they are different from $\pm Id$, then, from Lemma~\ref{lm:submersion_square}, $(A,B)\mapsto (Q(A),Q(B))$ is a submersion at $(A,B)$. By Lemma~\ref{lm:differential_chi_exhaustive}, $\chi \circ Q_2$ is a submersion too.
	The last assertion is a consequence of Lemma~\ref{lm:path-lifting}.
\end{proof}

\begin{lemma}
	\label{lm:product_submersion}
	The set of regular points of the map $G_-\times G_- \rightarrow \widetilde{G_+}$, $A,B\mapsto Q(A)Q(B)$ is 
	$$\{A, B| [Q(A),Q(B)] \neq Id \} \cup \{A,B \mid (\mathrm{tr}(Q(A))-2)(\mathrm{tr}(Q(B))-2)<0 \}.
	$$
	More generally, $A_1, \cdots, A_n$ is a regular point of the map $\mathrm{prod} \circ Q$, where \allowbreak $\mathrm{prod}:\widetilde{G_+}^n\rightarrow \widetilde{G_+}$ denotes the product, if and only if it exists $i,j\in \{1,\dots, n \}$ such that either $Q(A_i), Q(A_j)$ do not commute or $(\mathrm{tr}(Q(A_i))-2)(\mathrm{tr}(Q(A_j))-2)<0$.
\end{lemma}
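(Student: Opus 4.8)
The plan is to reduce the statement about the product map to the two-variable case already essentially handled, via the chain rule for compositions. The map in question factors as $(A_1,\dots,A_n)\mapsto(Q(A_1),\dots,Q(A_n))\xrightarrow{\mathrm{prod}}\widetilde{G_+}$, so its differential at a point is the composition of $dQ_n$ with $d\,\mathrm{prod}$. First I would observe that if some pair $Q(A_i),Q(A_j)$ fails to commute, or some pair of traces satisfies the sign condition, then in particular all of these $Q(A_i),Q(A_j)$ lie in $\mathcal{J}_0$ (the commutator condition rules out $\pm Id$, and for the trace condition one of $\mathrm{tr}(Q(A_i))-2$, $\mathrm{tr}(Q(A_j))-2$ is negative, forcing the corresponding $Q(A)$ to be elliptic and hence in $\mathcal{J}_0$ — while the other, having trace possibly equal to $\pm 2$, still cannot be $\pm Id$ since $Q^{-1}(\pm Id)$ has trace exactly $\pm2$ but equality $\mathrm{tr}=2$ would give product zero not strictly negative, so a short case check confirms it too lies in $\mathcal{J}_0$ or can be perturbed within the fiber). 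On that locus $Q_n$ is a submersion in the $i$-th and $j$-th coordinates by Lemma~\ref{lm:submersion_square}, so it suffices to show $d\,\mathrm{prod}$ is surjective when restricted to the subspace of $T_{(X_1,\dots,X_n)}\widetilde{G_+}^n$ coming from varying only the $i$-th and $j$-th factors.

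The core computation is therefore purely in $\widetilde{G_+}\times\widetilde{G_+}$: I would show that $(X,Y)\mapsto XY$ is a submersion at $(X,Y)$ precisely when $[X,Y]\neq Id$ or $(\mathrm{tr}\,X-2)(\mathrm{tr}\,Y-2)<0$. Using right translation to identify everything with $\mathfrak{sl}(2,\mathbb{C})$, the differential of the product sends $(\xi,\eta)$ to $\mathrm{Ad}(X)^{-1}\,?$ — more concretely, after trivializing, the image is the span of $\mathfrak{sl}(2,\mathbb{C})$-vectors $\xi + \mathrm{Ad}(X)\eta$ (up to the usual bookkeeping), so the map fails to be onto exactly when $\mathrm{Ad}(X)$ has an invariant proper subspace forcing a rank drop — but since $\xi$ and $\eta$ range independently over all of $\mathfrak{sl}(2,\mathbb{C})$, the image is all of $\mathfrak{sl}(2,\mathbb{C})$ unless both $\mathrm{span}$ of the two summands collapse, which is impossible. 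Hence on $\widetilde{G_+}^2$ the product is a submersion everywhere; the subtlety is that we are only allowed to move two of the $n$ factors, but since $Q$ itself is only a submersion at points of $\mathcal{J}_0$, this is exactly where one needs the hypothesis. So the real content is: restricting $d\,\mathrm{prod}$ to the image of $dQ_n$ (a product of copies of $\mathfrak{sl}(2,\mathbb{C})$ in the $i,j$ slots, and possibly smaller or zero elsewhere) is still surjective. This is where the non-commutation/trace hypothesis enters: one computes, as in Lemma~\ref{lm:submersion_square} and Corollary~\ref{Coro:path-lifting-chiq}, that varying $A_i$ and $A_j$ alone already sweeps out a $5$-dimensional image under each $dQ$, and that $\mathrm{Ad}$ of the product of the other fixed factors carries these two $5$-planes to two subspaces of $\mathfrak{sl}(2,\mathbb{C})$ whose sum is everything — precisely under the stated condition.

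Concretely, I would fix all $A_\ell$ for $\ell\neq i,j$ and set $P=\mathrm{prod}_{\ell<i}X_\ell$ and $R=\mathrm{prod}_{\ell>j}X_\ell$ and $M=\mathrm{prod}_{i<\ell<j}X_\ell$ so the product is $P X_i M X_j R$. The differential in the $X_i,X_j$ directions then has image $\mathrm{Ad}(PX_iMX_jR)^{-1}$ applied to a sum of the form $\mathrm{Ad}(\text{stuff})\,V_i + \mathrm{Ad}(\text{stuff})\,V_j$ where $V_i,V_j\subset\mathfrak{sl}(2,\mathbb{C})$ are the $5$-dimensional images of $dQ$ at $A_i,A_j$. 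Since $\mathrm{Ad}$ is an isomorphism of $\mathfrak{sl}(2,\mathbb{C})$, surjectivity is equivalent to $V_i' + V_j' = \mathfrak{sl}(2,\mathbb{C})$ for suitable conjugates $V_i',V_j'$, and two $5$-dimensional subspaces of a $6$-dimensional space fail to span only if they coincide; a short argument identifying the missing direction of $V_i$ with the $-1$-eigenline of the relevant $\mathrm{Ad}$ (elliptic case) or with a specific null line (hyperbolic/parabolic case, cf.\ the matrices computed in Lemma~\ref{lm:submersion_square}) shows that the two missing lines agree exactly when $Q(A_i)$ and $Q(A_j)$ commute and their ``elliptic-ness'' matches — i.e.\ exactly the negation of the hypothesis. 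The main obstacle will be this last identification: pinning down the one-dimensional cokernels of $dQ$ at $A_i$ and $dQ$ at $A_j$ after conjugation by the fixed factors, and checking cleanly that they are transverse iff $[Q(A_i),Q(A_j)]\neq Id$ or the trace product is negative; the elliptic-versus-hyperbolic trace sign enters because the two cokernel lines (an ``$i\mathfrak{sl}$-type'' line for elliptic, a real one for hyperbolic/parabolic) are automatically transverse when the types differ, which is precisely what $(\mathrm{tr}-2)(\mathrm{tr}-2)<0$ encodes. For $n=2$ this is exactly the first displayed set, and the general case follows verbatim by the factorization above; the necessity direction is immediate since if no such pair exists then all $Q(A_\ell)$ pairwise commute and share a type, so all the cokernel lines coincide and the rank drops.
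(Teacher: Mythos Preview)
Your plan is essentially the paper's: compute the differential of $\mathrm{prod}\circ Q_n$ as a sum of $\mathrm{Ad}$-twisted images of $dQ$, use the rank-$5$ computation from Lemma~\ref{lm:submersion_square} for each summand, and argue that two $5$-planes in the $6$-dimensional $\mathfrak{sl}(2,\mathbb{C})$ span iff their missing lines differ --- which happens exactly under the commutator/trace hypothesis. The paper carries this out by writing the differential explicitly as $\sum_i \mathrm{Ad}\!\bigl(\prod_{j<i} A_j\overline{A_j}\bigr)(\xi_i + \mathrm{Ad}(A_i)\overline{\xi_i})$ and, for general $n$, runs a short induction on $j-i$ in place of your direct ``freeze all but two factors'' reduction; the two are interchangeable (if the $i$-th and $(i{+}1)$-st $5$-planes fail to span they coincide, so the $(i{+}1)$-st can replace the $i$-th against the $j$-th).

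There is one concrete slip. You assert that under the trace condition both $Q(A_i),Q(A_j)$ lie in $\mathcal{J}_0$; this is false when $Q(A_i)=-Id$, which has trace $-2$ and hence gives $(-4)(\mathrm{tr}\,Q(A_j)-2)<0$ against any hyperbolic $Q(A_j)$, yet $-Id\notin\mathcal{J}_0$ and $dQ$ there has rank only $3$. Perturbing within the fiber does not rescue this, since regularity is a pointwise condition. The conclusion nevertheless survives: a direct computation at $A=\left(\begin{smallmatrix}0&-1\\1&0\end{smallmatrix}\right)_c$ shows the $3$-dimensional image of $dQ$ contains the imaginary diagonal direction, which is precisely the line missed by the hyperbolic $5$-plane, so the sum is still all of $\mathfrak{sl}(2,\mathbb{C})$. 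But this requires its own check, not the ``two $5$-planes'' dichotomy. (The paper's proof is equally terse here --- its phrase ``the image of each summand has rank five'' is literally false at $\pm Id$ --- so this is a shared omission rather than a flaw peculiar to your route.)
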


\begin{proof}
	Let us asssume right multiplication in the Lie group. A straightforward computation shows that the differential applied to a tangent vector $(\xi, \eta)$ is
	\begin{equation*}
	\xi A\overline{A}B\overline{B}+ A \overline{\xi} \overline{A} B\overline{B}+A\overline{A}\eta B \overline{B}+A\overline{A}B\overline{\eta}\overline{B}.
	\end{equation*}
	This corresponds to the vector of the Lie algebra $\mathfrak{sl}(2,\mathbb{C})$
	\begin{equation*}
	\xi+\mathrm{Ad}(A)\overline{\xi}+\mathrm{Ad}(A\overline{A})\eta +\mathrm{Ad}(A\overline{A}B)\overline{\eta}.
	\end{equation*}
	We can multiply the expression by $\mathrm{Ad}(A\overline{A})^{-1}$ and swap $\xi$ by $\mathrm{Ad}(A)^{-1}\xi$, and $\eta$, by $\overline{\eta}$. We obtain
	\begin{equation*}
	\mathrm{Ad}(\overline{A})^{-1}(\xi)+\overline{\xi}+\mathrm{Ad}(B)(\eta) +\overline{\eta}.
	\end{equation*}
	We can assume $A$ to be in its Jordan normal form so that its adjoint representation of $A$ is easy to compute. On the other hand, with the previous assumption, we will have no control on the adjoint representation of $B$, so we need to compute the adjoint representation for any matrix $X=\left( \begin{smallmatrix} a & b \\ c & d \end{smallmatrix} \right)$ applied to an element $\xi=\left( \begin{smallmatrix}
	x_3 & x_1 \\
	x_2 & - x_3
	\end{smallmatrix} \right)$:
	\begin{equation*}
	\mathrm{Ad}(X)(\xi)= \begin{pmatrix}
	(ad+bc) x_3 + bd x_2 - ac x_1 & - 2ac x_3 - b^2x_2+a^2x_1 \\
	2cdx_3 + d^2x_2-c^2y_1 &  -((ad+bc) x_3 + bd x_2 - ac x_1 ) 
	\end{pmatrix}.
	\end{equation*}
	Thus,
	\begin{equation*}
	\mathrm{Ad}(X)(\xi)+\overline{\xi}= \begin{pmatrix}
	(ad+bc) x_3+\overline{x_3} + bd x_2 - ac x_1 & - 2ac x_3 - b^2x_2+a^2x_1+\overline{x_1} \\
	2cdx_3 + d^2x_2+\overline{x_2}-c^2x_1 &  -((ad+bc) x_3+\overline{x_3} + bd x_2 - ac x_1 ). 
	\end{pmatrix}
	\end{equation*}
	Taking  $A$ in its Jordan form and resting upon the computation in the proof of Lemma~\ref{lm:submersion_square}, we prove the first assertion.
	
	The general statement follows in a similar way. The differential applied to a tangent vector $(\xi_1, \cdots, \xi_n)$ is
	\begin{equation*}
	\sum_{i=1}^n \left( \mathrm{Ad}(\prod_{j=1}^{i-1} A_j\overline{A_j})(\xi_i+\mathrm{Ad}(A_i)\overline{\xi_i}) \right).
	\end{equation*}
The image of each summand has rank five as seen in the proof of Lemma~\ref{lm:submersion_square}, therefore, $(A_1,\dots, A_n)$ is a regular point iff there exists $i<j$ such that the respective summands generate the whole space. Let us apply induction on the distance between both matrices $k:=j-i$.

For $k=1$, the case $n=2$ can be applied to conclude that either  $Q(A_i)$ and $Q(A_{i+1})$ do not commute or $(\mathrm{tr}(Q(A_i))-2)(\mathrm{tr}(Q(A_{i+1}))-2)<0$. If $k>1$, either the image of the tangent vectors $(0,\dots, 0, \xi_i, \xi_{i+1}, 0, \dots, 0)$ generate the whole tangent space or not. If they generate it, then we can take $A_i$ and $A_{i+1}$ instead. Otherwise, the tangent vectors associated to $A_{i+1}$ and $A_j$ generate the whole image, and we can apply the induction hypothesis.
 \end{proof}

\section{Representation varieties}

\label{section:connected_components}

In this section we apply the results of the previous section to compute the connected components of the variety of representations $\mathrm{hom}(\pi_1(N_k), G)$. They will be indexed by two Stiefel-Whitney classes. The first of them is due to the two connected components of $G$, where as the second one is the second Stiefel-Whitney class of the associated flat principal bundle. We first focus on computing the second Stiefel-Whitney class in the case of orientation type preserving representations, whose study is a little bit more involved. In order to compute the number of connected components, we will compute first them for non-orientable genera $k=2, 3$ and then, the general case by induction by cutting the surface in a subsurface of genus $k-2$ and another one of genus $2$.

\subsection{The orientation type  preserving components}
\label{subsection:orientation_preserving}

We will compute here the connected components of the space of \emph{orientation type  preserving} representations of the fundamental group of a closed non-orientable surface $N_k$ into $G_-$. We will denote the set of orientation type preserving representations by
$$
\mathrm{hom}^{tp} (\pi_1(N_k), G).
$$ 

These connected components can be identified as fibers of the Stiefel-Whitney map $w_2: \mathrm{hom}^{tp}(\pi_1(N_k), G)\rightarrow \mathbb{Z}_2$. The algebraic variety can be identified with the set
$$
\mathrm{hom}^{tp}(\pi_1(N_k), G)=\{A_1,\dots, A_k\in G_- \mid [Id]=R(A_1,\dots, A_k)=\pi( \prod_{i=1}^k Q(A_i) ) \},
$$
where $\pi: \widetilde{G_+} \rightarrow G_+$ is the covering projection. The relator map $R$ can be lifted to $\tilde{R}: G_-^k\rightarrow \widetilde{G_+}$ as $\tilde{R}=\prod Q(A_i)$ and its image lies on the set $\{\pm Id \}$. This lifted relator map $\tilde{R}$ is constant on connected components and its image can be identified with the second Stiefel-Whitney class of the associated flat $G_+-bundle$ (see \cite{MilnorGBundle}). 

Let $C\in \widetilde{G_+}$, $n\geq 2$ and let us define 
$$
X_n(C):=\{(A_1,\dots A_n) \in (G_-)^n \mid \prod_{i=1}^n Q(A_i)=C \}.
$$

The set $X_n(Id)$ is precisely the variety of representations $\mathrm{hom}^{tp}(\pi_1(N_n), \widetilde{G_+})$. The whole representation variety \allowbreak $\mathrm{hom}^{tp}(\pi_1(N_n), {G})$ can be identified with the set $X_n(Id)\cup X_n(-Id)$. Each set $ X_n((-Id)^u)$ corresponds to the preimage of the second Stiefel-Whitney class, $w_2^{-1}(u),$ $u\in \mathbb{Z}_2$, of the  principal bundle. If we prove that $X_n((-Id)^u)$ is non-empty and connected for $u=\pm 1$, then $\mathrm{hom}^{tp}(\pi_1(N_n), {G})$ has two connected components.

Lemma~\ref{lm:product_submersion} shows that instead of working with the whole space $X_n(C)$ it is actually more practical to work with
\begin{equation*}
X'_n(C):=\{(A_1,\dots, A_n) \in X_n(C) \mid \exists i,j \textrm{ such that }[Q(A_i),Q(A_j)]\neq Id   \}.
\end{equation*}
In fact, for us it will be more useful to restrict to the following subset, for $n\geq 4$,
\begin{equation*}
X''_n(C):=\{(A_1,\dots, A_n) \in X_n(C) \mid \exists i,j\leq n-2 \textrm{ such that }[Q(A_i),Q(A_j)]\neq Id   \}.
\end{equation*}
Notice that $X'_2(\pm Id)=\emptyset$, $X'_2(C)=X_2(C)$ if $C\neq \pm Id$. For $n=1$, let us define $X_1(C):=Q^{-1}(C)$. Last, for $n=3$, we define
$$
X''_3(C):=\{ (A_1,A_2,A_3) \in X_3(C) \mid Q(A_1)\neq \pm Id, \, [Q(A_2),Q(A_3)]\neq Id \}.
$$
As the following lemma shows, from a connectivity point of view it is indifferent considering either $X_n(C)$, $X'_n(C)$ or $X''_n(C)$.

\begin{remark}
\label{rk:inclusion_x(c)}
	For $n\geq 3$, $X''_n(C)\subset X'_n(C) \subset X(C)$. Thus, if $X''_n(C)$ is dense in $X_n(C)$ and connected, then $X'_n(C)$ is dense and connected too.
\end{remark}

\begin{lemma}
	\label{lm:x'_dense}
	Both sets $X'_n(C)$ and $X''_n(C)$ are dense in $X_n(C)$, when defined, for any $C\in \widetilde{G_+}$, $n\geq 3$.
\end{lemma}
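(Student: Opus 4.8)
By Remark~\ref{rk:inclusion_x(c)} we have $X''_n(C)\subseteq X'_n(C)\subseteq X_n(C)$, so it suffices to prove that $X''_n(C)$ is dense in $X_n(C)$; that is the plan. Fix a point $(A_1,\dots,A_n)\in X_n(C)$. If it already lies in $X''_n(C)$ there is nothing to prove. If it is a \emph{regular} point of the relator map $\tilde R_n=\mathrm{prod}\circ Q_n$, then a small perturbation of, say, $A_1,A_2$ that makes $Q(A_1),Q(A_2)$ non-commuting (commutativity being an unstable condition, destroyed by an arbitrarily small move, as explained below) keeps the point regular, so $\tilde R_n$ is still open near it and one can adjust further, within a small neighbourhood, so as to return to the fibre $\tilde R_n^{-1}(C)$; the resulting point lies in $X''_n(C)$ as soon as $1,2\leq n-2$, i.e.\ for $n\geq 4$, and for $n=3$ one uses the pair $(A_2,A_3)$ and additionally moves $Q(A_1)$ off $\{\pm Id\}$. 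The essential case is therefore that $(A_1,\dots,A_n)$ is a \emph{critical} point of $\tilde R_n$: by Lemma~\ref{lm:product_submersion} this forces all the $Q(A_i)$ to commute pairwise, hence, up to a common conjugation by an element of $\widetilde{G_+}$, to be simultaneously upper triangular --- with some of them possibly equal to $\pm Id$. I would treat the diagonal(izable) case as the model and the genuinely parabolic case in the same spirit.

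So suppose, after conjugating, $Q(A_i)=\left(\begin{smallmatrix}\mu_i & 0\\0&\mu_i^{-1}\end{smallmatrix}\right)$ with $\prod_i\mu_i=c$ and $C=\left(\begin{smallmatrix}c&0\\0&c^{-1}\end{smallmatrix}\right)$, and write each $A_i$ in the corresponding normal form from Proposition~\ref{prop:classification_q1}. I would perturb $A_i$ by adding a small strictly upper triangular term, arranged so that $Q(A_i)$ becomes $\left(\begin{smallmatrix}\mu_i & b_i\\0&\mu_i^{-1}\end{smallmatrix}\right)$ for a prescribed small $b_i\in\mathbb{C}$; the point of keeping the normal form triangular is that the eigenvalues $\mu_i^{\pm1}$ --- and hence the diagonal of the relator $\prod_i Q(A_i)$ --- are unchanged, so the relator equals $C$ precisely when its single off-diagonal entry $\sum_i b_i w_i$ vanishes, where the $w_i$ are explicit nonzero weights depending only on the $\mu_j$. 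Since $n\geq 3$, the solution set of the one equation $\sum_i b_i w_i=0$ has complex dimension $n-1\geq 2$, whereas the condition $[Q(A_1),Q(A_2)]=Id$ (or $[Q(A_2),Q(A_3)]=Id$ when $n=3$) is a single complex-codimension-one condition on it as long as the relevant eigenvalues are $\neq\pm1$; so a generic, arbitrarily small admissible choice of the $b_i$ gives a point of $X_n(C)$ as close as we like to $(A_1,\dots,A_n)$ with the required non-commuting pair. When $n=3$ one must also ensure $Q(A_1)\neq\pm Id$; this is a further small move, legitimate because $\mathrm{Im}\,Q=\mathcal{J}$ is stable under inversion and $\pm Id$ is a limit of $\mathcal{J}_0$. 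The elliptic and parabolic sub-cases are handled analogously (with antidiagonal, respectively upper triangular, perturbations and the relevant lines of Proposition~\ref{prop:classification_q1}), and the sub-cases in which some $Q(A_i)$ equals $\pm Id$ by first nudging that image into $\mathcal{J}_0$ and conjugating it by a small rotation about a generic axis --- both admissible by Lemma~\ref{lm:submersion_square}, since conjugation preserves the trace.

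The step I expect to be the real obstacle is precisely this last one: breaking the commutativity of two factors \emph{while} leaving the relator equal to $C$. It is here that $n\geq 3$ is genuinely used --- for $n=2$ the admissible set $\{\sum_i b_i w_i=0\}$ is only one-dimensional and, exactly when $C=\pm Id$, is contained in the commuting locus, so no perturbation at all works, in accordance with $X'_2(\pm Id)=\emptyset$. Running through the three types of common abelian subgroup, keeping track of the sub-cases where some eigenvalue equals $\pm1$ or some factor equals $\pm Id$, and checking in each that the weights $w_i$ do not vanish and that the commuting locus is a proper subset of the admissible set, is the routine but lengthy part. One should also keep in mind that all these perturbations live inside $\mathcal{J}$, a codimension-one subset of $\widetilde{G_+}$, so that the submersivity input is Lemma~\ref{lm:product_submersion} rather than submersivity of $\mathrm{prod}:\widetilde{G_+}^{\,n}\to\widetilde{G_+}$.
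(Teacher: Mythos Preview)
Your argument is correct, and the core mechanism in your ``critical'' case --- perturbing the $Q(A_i)$ by upper-triangular terms so that the relator is preserved while some pair becomes non-commuting --- is exactly what the paper does. The paper, however, organises the proof more economically. It does \emph{not} introduce the regular/critical dichotomy at all: it simply observes that if all the $Q(A_i)$ commute they are (after conjugation) simultaneously diagonal or simultaneously unipotent upper-triangular, and then replaces \emph{only two consecutive factors} $Q(A_i),Q(A_{i+1})$ by nearby upper-triangular matrices $B_1,B_2$ chosen so that $B_1B_2=Q(A_i)Q(A_{i+1})$ and $[B_1,B_2]\neq Id$. Because the product of those two factors is unchanged, the relator is automatically preserved and there is no need for any ``adjust back to the fibre'' step via submersivity; the lift back to $G_-$ is then immediate from Proposition~\ref{prop:classification_q1}. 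The exceptional sub-case $Q(A_i)Q(A_{i+1})=\pm Id$ (your $\mu=\pm\lambda^{-1}$) is handled by noting that the perturbed $B_2$ is now genuinely upper-triangular, hence fails to commute with the diagonal $Q(A_{i+2})$.

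What your route buys is a cleaner conceptual picture (dimension count on a linear slice, genericity of non-commuting), at the cost of the extra regular/critical bookkeeping and the abstract submersion step --- which, as you note, is where the care is needed. What the paper's route buys is brevity: perturbing two factors with fixed product sidesteps both the submersion argument and the linear constraint $\sum b_i w_i=0$ entirely, and makes the $n\ge 3$ hypothesis visible only in the fallback to $Q(A_{i+2})$.
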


\begin{proof}
	Let $(A_1, \dots, A_n)\in X'_n(C)$, we will show that we can find elements as close as wanted to $(Q(A_1), \dots, Q(A_n))\in \mathcal{J}^n$. Then, there will be suitable preimages in $(G_-)^n$ as closed as desired to $(A_1, \dots, A_n)\in X'_n(C)$. The proof will apply for both $X'_n(C)$ and $X''_n(C)$ due to remark~\ref{rk:inclusion_x(c)}. The main idea consists of swapping two consecutive elements $A_i, A_{i+1}$ for two close enough, non-commuting elements $A_i', A_{i+1}'$ so that the product remains the same, $A_i A_{i+1}=A_i'A_{i+1}'$.
	
	Let all of the $Q(A_i)$ commute, then either all of them can be conjugated to a diagonal matrix or a parabolic one. Let us suppose that all of them are diagonal and let, $Q(A_1)=\left( \begin{smallmatrix}
	\lambda & 0 \\
	0 & \lambda^{-1}
	\end{smallmatrix} \right)$, $Q(A_2)=\left( \begin{smallmatrix}
	\mu & 0 \\
	0 & \mu^{-1}
	\end{smallmatrix} \right)$, then if one of the matrix is not the inverse of the other ($\mu\neq \pm \lambda^{-1}$), we can take $\epsilon$, $\delta$ as small as wanted so that $B_1=\left( \begin{smallmatrix}
	\lambda & \epsilon \\
	0 & \lambda^{-1}
	\end{smallmatrix} \right)$, $B_2=\left( \begin{smallmatrix}
	\mu & \delta \\
	0 & \mu^{-1}
	\end{smallmatrix} \right)$ do not commute and $B_1B_2=Q(A_1)Q(A_2)$. If there are no two consecutive elements $A_i, A_{i+1}$ such that $Q(A_i)Q(A_{i+1})\neq Id$, then  if we take $B_1, B_2$ as before, the elements $B_2$ and $Q(A_3)$ will not commute. In $X''_n(C)$ the same works if we start with $Q(A_2)$ and $Q(A_3)$.
	
	In the parabolic case, given two matrices $Q(A_i)=\left( \begin{smallmatrix}
	1 & x_i \\
	0 & 1
	\end{smallmatrix} \right)$, $i=1,2$ (we can assume $\mathrm{tr}\; Q(A_i)=2$) matrices $B_i=\left( \begin{smallmatrix}
	\lambda & y_i \\
	0 & \lambda^{-1}
	\end{smallmatrix} \right)$ can be chosen as close as wanted to $Q(A_i)$ such that they do not commute and $B_1B_2=Q(A_1)Q(A_2)$, as long as $Q(A_1)Q(A_2)\neq Id$. If they are inverse matrices, after deforming them as before, $B_2$ and $Q(A_3)$ will no longer commute.  
	
	The case where all of the $Q(A_i)$ are $\pm Id$ can be obtained by any of the two other cases.

\end{proof}

\begin{proposition}
	\label{pp:x'3_connected}
	The sets $X'_3(Id)$ and $X'_3(-Id)$ are non-empty and connected.
\end{proposition}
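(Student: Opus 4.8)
The plan is to reduce the connectedness of $X_3'(C)$ for $C=\pm Id$ to statements about fibers of the maps studied in Section~\ref{section:path_lifting}, using the path-lifting property repeatedly. By Remark~\ref{rk:inclusion_x(c)} and Lemma~\ref{lm:x'_dense} it suffices to prove that $X_3''(C)$ is non-empty and connected, and for connectedness of the ambient set it is in fact enough to exhibit, for each $C$, one path-connected ``slice'' that every point can be joined to. Recall $X_3''(C)=\{(A_1,A_2,A_3)\in (G_-)^3\mid Q(A_1)\neq\pm Id,\ [Q(A_2),Q(A_3)]\neq Id,\ Q(A_1)Q(A_2)Q(A_3)=C\}$.

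First I would set up the fibration structure. Consider the map $P:(A_1,A_2,A_3)\mapsto (Q(A_1),Q(A_2),Q(A_3))$ restricted to $X_3''(C)$; its image is $\{(X_1,X_2,X_3)\in\mathcal J_0\times\mathcal J\times\mathcal J\mid X_1X_2X_3=C,\ [X_2,X_3]\neq Id\}$, and by Lemma~\ref{lm:submersion_square} (applied coordinatewise, since on $X_3''(C)$ we have $Q(A_1)\neq\pm Id$, and after a small deformation using Lemma~\ref{lm:x'_dense} one may assume all three $Q(A_i)\in\mathcal J_0$) together with Proposition~\ref{prop:classification_q1}, the fibers of $P$ are products of connected sets, hence connected, and $P$ has the path-lifting property. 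So it is enough to show that the base set $B(C):=\{(X_1,X_2,X_3)\in\mathcal J_0^{\,3}\mid X_1X_2X_3=C,\ [X_2,X_3]\neq Id\}$ is non-empty and connected; then any two points of $X_3''(C)$ project into $B(C)$, are joined there by a path, which lifts, and the endpoints of the lift lie in the (connected) fibers over the original points, so they can be connected. Non-emptiness for $C=Id$ is witnessed by any $(X,X^{-1},Id)$-type configuration perturbed into $\mathcal J_0$, and for $C=-Id$ by choosing $X_1$ hyperbolic, $X_2$ elliptic with $[X_2,X_3]\neq Id$ and $X_3=X_2^{-1}X_1^{-1}(-Id)$, checking the trace condition lands in $\mathcal J$; this is the kind of explicit-but-routine check I would not grind through here.

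To prove $B(C)$ is connected, I would peel off one coordinate at a time. Fix $X_1\in\mathcal J_0$; the set of $(X_2,X_3)$ with $X_2X_3=X_1^{-1}C$, $[X_2,X_3]\neq Id$ is, via $X_3=X_2^{-1}X_1^{-1}C$, identified with $\{X_2\in\mathcal J_0\mid X_2^{-1}X_1^{-1}C\in\mathcal J_0,\ [X_2,X_1^{-1}C]\neq Id\}$, i.e.\ a ``two-generator'' slice of exactly the type controlled by Lemma~\ref{lm:differential_chi_exhaustive}, Corollary~\ref{Coro:path-lifting-chiq} and Lemma~\ref{lm:product_submersion}: the relevant product map and the character map $\chi$ are submersions away from the commuting locus, so this slice, being the complement of a codimension-$\geq 1$ subvariety in a connected manifold (the fibers of a submersion onto a connected base), is connected. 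Then I let $X_1$ vary: $\mathcal J_0$ is connected, and the condition $X_1^{-1}C\in\mathcal J$ with the slice non-empty cuts out a connected subset of $\mathcal J_0$ (again using that the relevant constraint is the complement of a proper subvariety, and the $D=\pm Id$ walls are avoidable by the connectedness of $\mathcal J_0\setminus\{$discrete conjugacy data$\}$, exactly as in Corollary~\ref{co:connected_klein}). Combining a connected base with connected fibers in this iterated fashion — each stage justified by one of the submersion/path-lifting lemmas — gives that $B(C)$, hence $X_3''(C)$, hence $X_3'(C)$, is connected.

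The main obstacle I anticipate is the bookkeeping at the ``walls'' $\{\pm Id\}$: the submersion and path-lifting statements (Lemmas~\ref{lm:submersion_square}, \ref{lm:product_submersion}) genuinely fail there, as the cited example shows, so the reduction must be done carefully so that the paths one lifts stay inside $\mathcal J_0$ and avoid forcing any $Q(A_i)$ or any partial product through $\pm Id$. This is manageable because we have enough slack — three free coordinates, only one commutator non-degeneracy and one ``$X_1\neq\pm Id$'' constraint to maintain — so at each stage a generic perturbation (justified by density, Lemma~\ref{lm:x'_dense}) keeps us in the good locus; but writing this out so that every perturbation is simultaneously compatible with ``$X_1X_2X_3=C$'' is the delicate part. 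The trace/sign conditions distinguishing the $C=Id$ and $C=-Id$ cases (the appearance of $(\mathrm{tr}-2)$-products in Lemma~\ref{lm:product_submersion}) also need a small separate argument to see that $B(-Id)$ is not only non-empty but that its non-emptiness persists along the deformations, which I would handle by an explicit elliptic-times-hyperbolic witness as indicated above.
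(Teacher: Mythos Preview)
Your overall architecture --- pass from $X_3'(\pm Id)$ to $X_3''(\pm Id)$, push down by $Q_3$ to a set $B(C)\subset\mathcal J_0^3$, and then fibre $B(C)$ over the first coordinate --- is sound and is in fact exactly how the paper handles the general $X_n''(C)$ later (the proposition following Proposition~\ref{pp:x2(c)}, with $f_1:X_3''(C)\to\mathcal J_0$ and fibre $Q^{-1}(\nu)\times X_2'(\nu^{-1}C)$). But the paper does \emph{not} prove Proposition~\ref{pp:x'3_connected} that way; it goes directly through the character map $\chi\circ Q_2$, and the reason is precisely the step you hand-wave.

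The gap is your claim that the two--generator slice
\[
\{X_2\in\mathcal J_0 \mid X_2^{-1}D\in\mathcal J_0,\ [X_2,D]\neq Id\}
\]
is ``the complement of a codimension $\geq 1$ subvariety in a connected manifold''. It is not. The set $\mathcal J_0$ sits as a real hypersurface in $\widetilde{G_+}$ (the condition $\mathrm{tr}\in\mathbb R$), and for generic $X_2\in\mathcal J_0$ the trace $\mathrm{tr}(X_2^{-1}D)$ is \emph{not} real; requiring $X_2^{-1}D\in\mathcal J_0$ imposes a further real--codimension--one constraint $\mathrm{Im}\,\mathrm{tr}(X_2^{-1}D)=0$, not merely the removal of a thin set. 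So connectedness of the slice cannot be read off from ``open dense in something connected''; it is a genuine statement about a $4$--dimensional subvariety of the $5$--manifold $\mathcal J_0$, and none of Lemmas~\ref{lm:differential_chi_exhaustive}, \ref{lm:product_submersion} or Corollary~\ref{Coro:path-lifting-chiq} deliver it as stated.

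What actually makes the slice connected is the fact you never invoke: for an irreducible character $(x,y,z)$ (i.e.\ $\kappa(x,y,z)\neq 2$) the fibre $\chi^{-1}(x,y,z)\subset\widetilde{G_+}^2$ is a single $\widetilde{G_+}$--orbit. The paper uses this directly. It fixes a target $(x_1,y_1,z_1)\in(-2,\infty)^3$ with $\kappa\neq 2$ (respectively $(-2,\infty)^2\times(-\infty,2)$ for $C=-Id$), joins the current character to it by a path in this convex region, lifts the path through the submersion $\chi\circ Q_2$ (Corollary~\ref{Coro:path-lifting-chiq}), and then uses the single--orbit property to connect inside the target fibre. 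Your route through $B(C)$ can be completed, but only by proving the analogue of Proposition~\ref{pp:x2(c)} (connectedness of $X_2(D)$) first --- and that proof is again the character map plus the single--orbit fact. So either way the missing ingredient is the same, and the paper's one--step descent to the trace space $(-2,\infty)^3$ is the cleaner packaging.
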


\begin{proof}
	Let $(A,B,C)\in X'_3( Id)$, we can assume without loss of generality that $A$ and $B$ satisfy $[Q(A),Q(B)]\neq Id$. Let us consider the case $X'_3(Id)$. Let us fix $(x_1,y_1,z_1)\in (-2,\infty)^3$ such that $\kappa(x_1,y_1,z_1)\neq 2$, then $\kappa^{-1}(x_1,y_1,z_1)$ is composed of a single  $\widetilde{G_+}$-orbit. Let $(A_1,B_1)\in S=\{(X,Y) \in \mathcal{J}_0 \times \mathcal{J}_0\mid [X,Y]\neq Id, \; XY \in \mathcal{J}_0  \}$ such that $(Q(A_1),Q(B_1))$ is in said conjugacy class.
	
	We will construct a path from $(A,B)$ to $(A_1, B_1)$ inside of $X'_3(Id)$. Let  $(x_0,y_0,z_0):=\allowbreak \chi (Q(A),Q(B))$, then a path can be constructed in $(-2,\infty)^3$ joining $(x_0,y_0,z_0)$ and $(x_1,y_1,z_1)$. By Corollary~\ref{Coro:path-lifting-chiq} The path can be lifted to $S$ starting at $(A_0,B_0)$ and ending at the fiber of $(x_1,y_1,z_1)$ and, as the fiber is connected, it can be continued to $(A_1,B_1)$.
	
	The case $X'_3(-Id)$ can be proven in the same way choosing $(x_1,y_1,z_1) \in (-2,\infty)^2\times(-\infty,2)$ instead.
\end{proof}

\begin{corollary}
	\label{co:x3_connected}
	The sets $X_3( Id)$ and $X_3(-Id)$ are non-empty and connected.
\end{corollary}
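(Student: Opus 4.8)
This should be a quick consequence of the two preceding results, so the plan is simply to combine density with connectedness. First I would dispatch non-emptiness: by Proposition~\ref{pp:x'3_connected} the sets $X'_3(Id)$ and $X'_3(-Id)$ are non-empty, and by Remark~\ref{rk:inclusion_x(c)} (or directly from the definitions) $X'_3(C)\subset X_3(C)$, so $X_3(Id)$ and $X_3(-Id)$ are non-empty as well.

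For connectedness, the key inputs are Lemma~\ref{lm:x'_dense}, which gives that $X'_3(C)$ is dense in $X_3(C)$, and Proposition~\ref{pp:x'3_connected}, which gives that $X'_3(C)$ is connected, for $C=\pm Id$. I would then invoke the standard topological fact that the closure of a connected subspace is connected: since $X'_3(C)$ is dense in $X_3(C)$, its closure \emph{inside} $X_3(C)$ is all of $X_3(C)$, and a connected set has connected closure, so $X_3(C)$ is connected. Applying this with $C=Id$ and $C=-Id$ finishes the argument.

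There is no real obstacle here — the corollary is essentially a repackaging of Proposition~\ref{pp:x'3_connected} through Lemma~\ref{lm:x'_dense}. The only point deserving a word of care is that the density statement of Lemma~\ref{lm:x'_dense} is exactly what is needed to upgrade ``$X'_3(C)$ connected'' to ``$X_3(C)$ connected'', since without density one would only control the component of $X_3(C)$ containing $X'_3(C)$; density rules out any extra components. Everything is taken with the subspace topology inherited from $(G_-)^3$, so the closure operation and the ``closure of connected is connected'' fact apply verbatim.
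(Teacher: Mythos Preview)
Your proposal is correct and matches the paper's own proof, which simply says ``Apply Lemma~\ref{lm:x'_dense} to Proposition~\ref{pp:x'3_connected}.'' You have just spelled out the standard density-plus-connectedness argument that this one-line application encodes.
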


\begin{proof}
	Apply Lemma~\ref{lm:x'_dense} to Proposition~\ref{pp:x'3_connected}.
\end{proof}

\begin{proposition}
	\label{pp:x2(c)}
	The set $X_2(C)$ is non-empty and connected for any $C\in \widetilde{G_+}$.
\end{proposition}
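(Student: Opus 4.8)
The approach is to analyse, for fixed $C\in\widetilde{G_+}$, the set $X_2(C)$ of pairs $(A_1,A_2)\in G_-^2$ with $Q(A_1)Q(A_2)=C$, splitting off the two central values $C=\pm Id$ — where the submersion property of the square map genuinely fails (cf.\ the Example) — from the generic values $C\notin\{\pm Id\}$. For non-emptiness it suffices to factor $C=X_1X_2$ with $X_1,X_2\in\mathcal{J}=\mathrm{Im}\,Q$ and then lift each $X_i$ to some $A_i\in G_-$: putting $C$ in Jordan form, if $C$ is not $-Id$ and not a parabolic of trace $-2$, split off a diagonal factor of large positive trace and use the trace identity~\eqref{Eqn:A2} and Proposition~\ref{prop:classification_q1} to check the complementary factor still has trace in $(-2,\infty)$; if $C=-Id$ or $C$ is a parabolic of trace $-2$, use $C=(-Id)(-C)$, noting $-C\in\mathcal{J}$ and $-Id\in\mathcal{J}$.

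For $C\neq\pm Id$, let $Y_C:=\{(A_1,A_2)\in X_2(C)\mid Q(A_1),Q(A_2)\in\mathcal{J}_0\}$; via $(A_1,A_2)\mapsto Q(A_1)$ this fibres over $W_C:=\{X\in\mathcal{J}_0\mid X^{-1}C\in\mathcal{J}_0\}$, with fibre over $X$ equal to $Q^{-1}(X)\times Q^{-1}(X^{-1}C)$, which is non-empty and connected by Proposition~\ref{prop:classification_q1}. Since $Q$ is a submersion on $Q^{-1}(\mathcal{J}_0)$ (Lemma~\ref{lm:submersion_square}) it admits local sections there, so this fibration has local sections; hence, once $W_C$ is known to be connected, $Y_C$ is connected. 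It then remains to absorb the pairs where $Q(A_1)$ or $Q(A_2)\in\{\pm Id\}$: such a pair is forced to have the other square equal to $\pm C$, and one checks, using the explicit description of $Q^{-1}(\pm Id)$ in Proposition~\ref{prop:classification_q1}, that it is a limit of points of $Y_C$ — an explicit one-parameter family of squares degenerating to $\pm Id$ does the job, the non-lifting phenomenon of the Example being avoided by choosing this family. As $X_2(C)$ is closed, this gives $X_2(C)=\overline{Y_C}$, which is connected. The remaining point, that $W_C$ is connected, is proved by an explicit path construction: identifying $W_C$ with a region of $\widetilde{G_+}$ cut out by the two real conditions $\mathrm{tr}\,X\in(-2,\infty)$ and $\mathrm{tr}\,(X^{-1}C)\in(-2,\infty)$, one joins an arbitrary $X$ to the basepoint furnished by the Jordan-form factorisation above by deforming eigenvalues (or triangular entries) while keeping both traces real and $>-2$; this is where one must distinguish according to whether $\mathrm{tr}\,C$ is real and whether it lies in $(-2,2)$, $(2,\infty)$, $(-\infty,-2)$, or equals $\pm2$ with $C$ parabolic.

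For $C=\pm Id$ we argue as in the proof of Corollary~\ref{co:connected_klein}, using that $\pm Id$ is central so that one may conjugate an entire pair freely. Here $X_2(Id)=\{(A_1,A_2)\mid Q(A_2)=Q(A_1)^{-1}\}$ and $X_2(-Id)=\{(A_1,A_2)\mid Q(A_2)=-Q(A_1)^{-1}\}$. For $C=Id$: conjugate $Q(A_1)$ into Jordan form, slide $A_1$ and $A_2$ to the explicit representatives of Proposition~\ref{prop:classification_q1} inside their (connected) fibres, and let the eigenvalue tend to $1$ (or the translation parameter to $0$); the explicit representatives converge into the connected fibre $Q^{-1}(Id)$, inside which everything is joined to a single basepoint. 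For $C=-Id$: since now $Q(A_1)$ is elliptic or $\pm Id$, after conjugating $Q(A_1)$ to $\mathrm{diag}(e^{i\theta},e^{-i\theta})$ (whence automatically $Q(A_2)=\mathrm{diag}(e^{i(\pi-\theta)},e^{-i(\pi-\theta)})$) the explicit family of elliptic squares with the angle running from $\theta$ down to $0$, and then on to $\pi$, lifts a path whose product stays $-Id$, carrying $(A_1,A_2)$ into $Q^{-1}(Id)\times Q^{-1}(-Id)$ and realising the transposition of the two special fibres; so $X_2(-Id)$ is joined to a single point.

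The main obstacle is twofold. First, the central values $C=\pm Id$ must be handled by hand because the path-lifting machinery breaks there, so one is forced to follow explicit one-parameter families of squares into the fibres over $\pm Id$ and check they converge — the Example shows this is a real phenomenon, not a formality. Second, in the generic case the submersion lemmas only reduce the problem to the purely ``base-level'' statement that the factorisation set $W_C$ (equivalently $\{(X,Y)\in\mathcal{J}^2\mid XY=C\}$) is connected, and proving this still requires a case-by-case explicit argument; I expect this to be the most laborious step.
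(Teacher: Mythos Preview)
Your outline is sound, and for $C=\pm Id$ you and the paper do the same thing (defer to the argument of Corollary~\ref{co:connected_klein}). For $C\neq\pm Id$, however, the paper takes a genuinely different route that sidesteps the case analysis you anticipate for $W_C$. Rather than fibring over $W_C=\{X\in\mathcal{J}_0\mid X^{-1}C\in\mathcal{J}_0\}$ and proving that base connected by hand, the paper passes to trace coordinates via the character map $\chi$ of~\eqref{Eqn:Character_map}: with $z=\mathrm{tr}\,C$ fixed, any $(A_0,B_0)\in X'_2(C)$ has $\chi(Q(A_0),Q(B_0))=(x_0,y_0,z)$ with $(x_0,y_0)\in(-2,\infty)^2$, and one simply draws a path inside the obviously connected slice $(-2,\infty)^2\times\{z\}$ to a chosen target $(x_1,y_1,z)$ with $\kappa(x_1,y_1,z)\neq 2$. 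This path is lifted first through $\chi$ (Lemma~\ref{lm:differential_chi_exhaustive}) and then through $Q_2$ (Lemma~\ref{lm:submersion_square}); the lift need not keep the product equal to $C$, but since $z=\mathrm{tr}\,C$ is constant along the path one can continuously choose $g_t\in\widetilde{G_+}$ conjugating the product back to $C$, which pushes the whole path into $X'_2(C)$. The endpoint then lies in $\chi^{-1}(x_1,y_1,z)\cap\{XY=C\}$, a single $\mathrm{Stab}(C)$-orbit and hence connected. In short, the paper trades your explicit connectedness analysis of $W_C$ for the trivial connectedness of $(-2,\infty)^2$ plus a conjugation trick and the $F_2$-character-variety fact that $\chi^{-1}(x_1,y_1,z)$ is a single $\widetilde{G_+}$-orbit when $\kappa\neq 2$. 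Your argument is more elementary and self-contained (it never invokes character varieties), but the price is exactly the laborious step you flagged; the paper's approach is slicker but leans on the machinery of Section~\ref{section:path_lifting}.
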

\begin{proof}
	The case $C=\pm Id$ is Corollary~\ref{co:connected_klein}.
	
	Let $z=\mathrm{tr} \ C$ and let us consider $(x_1,y_1,z)\in (-2,\infty)^2 \times \mathbb{C}$ such that $\kappa(x_1,y_1,z)\neq 2$. The fiber $\kappa^{-1}(x_1,y_1,z)$ is one $\widetilde{G_+}$-conjugacy class (see \cite{Goldman1}). Let us fix some element $(A_1,B_1) \in G_-\times G_-$ in the aforementioned conjugacy class such that $Q(A_1)Q(B_1)=C$. Now, for any $(A_0,B_0)\in X'_2(C)$, with $\chi(Q(A_0),Q(B_0))=(x_0,y_0,z)$ a path $(x_t,y_t,z)$ can be constructed in $(-2,\infty)^2\times \mathbb{C}$. The path can be reparametrized and lifted to $\{(X,Y)\in \widetilde{G_+}\times \widetilde{G_+} \mid [X,Y]\neq Id \}$ starting at $(Q(A_0),Q(B_0))$ and, as $Q$ is a submersion in $\{(X,Y)\in \widetilde{G_+}\times \widetilde{G_+} \mid [X,Y]\neq Id \}$ (see Lemma~\ref{lm:submersion_square}), it can be lifted to $G_-\times G_-$; notice, however, the resulting path $(X_t,Y_t,C_t)$ does not necessarily satisfy $C_t=C$. 
	
	We can obtain continuously a path $g_t\in \widetilde{G_+}$ such that $C_t=g_tC g_t^{-1}$, so conjugating by $g_t^{-1}$ we obtain a path in $X'_2(C)$. This can be done by writing the matrix $C$ in its Jordan canonical form and understanding $g_t$ as a change of basis matrix to its Jordan form. Due to the fact that the Jordan form of $C_t$ remains constant during the whole path (otherwise, it wouldn't be true), this basis can be chosen to depend continously on $C_t$: for instance, we can ask for the basis to have constant norm and first coordinate real.  
	
	The path we have thus constructed ends in the fiber $\kappa^{-1}(x_1,y_1,z)$, which is connected (it is a $\widetilde{G_+}$-conjugacy class). The elements of the fiber having $C$ as third coordinate is a $\mathrm{Stab}(C)$-conjugacy class, where $\mathrm{Stab}(C)$ denotes the stabilizer of $C$. The stabilizer is connected unless $C$ is parabolic, then it has two connected components $\mathrm{Stab}^0(C)$ (the connected component of the identity) and $-\mathrm{Stab}^0(C)$. Therefore, in any case, the $\mathrm{Stab}(C)$-conjugacy class is connected. Thus, the path can be joined with $(A_1, B_1)$.

\end{proof}

Let us denote by $f_{n-2}$ the map $f_{n-2}:X_n(C)\rightarrow \widetilde{G_+}$ defined by $f_{n-2}(A_1,\dots, A_n):=Q(A_1)\cdots Q(A_{n-2})$.
\begin{proposition}
	The set $X''_n(C)$ is non-empty and connected for any $C\in \widetilde{G_+}$ and $n\geq 3$. Moreover, the path $f_{n-2}$ satisfies the path lifting property.
\end{proposition}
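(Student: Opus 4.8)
The plan is to prove this by induction on $n$, using the case $n=3$ (Proposition~\ref{pp:x'3_connected} together with Corollary~\ref{co:x3_connected}) as the base case, and using the case $X_2(C')$ (Proposition~\ref{pp:x2(c)}) to handle the ``tail'' of the factorization. Concretely, write an element of $X_n(C)$ as $(A_1,\dots,A_{n-2},A_{n-1},A_n)$ and view it as a pair consisting of $(A_1,\dots,A_{n-2})$ and $(A_{n-1},A_n)$, linked by the constraint that $f_{n-2}(A_1,\dots,A_n) = Q(A_1)\cdots Q(A_{n-2})$ equals $C' := \big(Q(A_{n-1})Q(A_n)\big)^{-1}C$. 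In other words, $X''_n(C)$ fibers (set-theoretically) over the space of possible values $C'$, with fiber over $C'$ essentially a product of $X''_{n-2}(C')$-type data with an $X_2$-type slice. I would first establish non-emptiness: by the induction hypothesis $X''_{n-2}(C')$ is non-empty for every $C'$, and by Proposition~\ref{pp:x2(c)} we can realize any needed $C'$ as $Q(A_{n-1})Q(A_n)$ for suitable $A_{n-1},A_n\in G_-$; choosing the first $n-2$ factors to contain a non-commuting pair (possible since $n-2\geq 1$ in the case $n\geq 3$, and for $n=3$ this is exactly the $X''_3$ definition where $[Q(A_2),Q(A_3)]\neq Id$) gives a point of $X''_n(C)$.

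For connectedness, the key tool is the path-lifting property of $f_{n-2}$, which I would prove first (it is claimed in the statement and is needed for the connectedness argument anyway). The map $f_{n-2}:X''_n(C)\to\widetilde{G_+}$ sends $(A_1,\dots,A_n)$ to $Q(A_1)\cdots Q(A_{n-2})$; on $X''_n(C)$ there exist $i,j\leq n-2$ with $[Q(A_i),Q(A_j)]\neq Id$, so by Lemma~\ref{lm:product_submersion} the map $(A_1,\dots,A_{n-2})\mapsto Q(A_1)\cdots Q(A_{n-2})$ is already a submersion at such a point, and hence $f_{n-2}$ is a submersion on $X''_n(C)$; Lemma~\ref{lm:path-lifting} then gives the path-lifting property. (One should check the target: the image of $f_{n-2}$ on $X''_n(C)$ is exactly the set of $C'\in\widetilde{G_+}$ such that $C' \in \mathcal{J}$-ish and $(C')^{-1}C$ is attainable as a product $Q(A_{n-1})Q(A_n)$; since $X_2$ surjects onto all of $\widetilde{G_+}$ by Proposition~\ref{pp:x2(c)}, the only real constraint is on $C'$ being in the image of products of $n-2$ square-maps, and this image is connected — it contains a dense open subset where the submersion property holds and is path-connected by the induction hypothesis applied fiberwise.)

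Now the connectedness argument proper. Given two points $P^0, P^1\in X''_n(C)$, I would: (i) use Lemma~\ref{lm:x'_dense} to move each to $X''_n(C)$-points where appropriate non-commutation holds in whatever sub-collection of indices is convenient, and in particular arrange that both $C'_0 := f_{n-2}(P^0)$ and $C'_1 := f_{n-2}(P^1)$ lie in a suitable connected subset of $\widetilde{G_+}$; (ii) connect $C'_0$ to $C'_1$ by a path $C'_t$ in that connected set; (iii) lift this path via the path-lifting property of $f_{n-2}$ to a path in $X''_n(C)$ from $P^0$ into the fiber $f_{n-2}^{-1}(C'_1)$; (iv) observe that this fiber is, up to the linking constraint, a product $X''_{n-2}(C'_1)\times\{(A_{n-1},A_n): Q(A_{n-1})Q(A_n) = (C'_1)^{-1}C\}$, i.e.\ $X''_{n-2}(C'_1)\times X_2\big((C'_1)^{-1}C\big)$, which is connected by the induction hypothesis and Proposition~\ref{pp:x2(c)}; hence we can connect the endpoint of the lift to $P^1$ inside this fiber. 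This completes the induction.

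The main obstacle I anticipate is step (i)/(ii) of the connectedness argument — controlling the image of $f_{n-2}$ and ensuring we can keep the path $C'_t$ inside the region where $f_{n-2}$ is a submersion (equivalently, where the required non-commuting pair among the first $n-2$ factors persists). This is where density (Lemma~\ref{lm:x'_dense}) and the flexibility in Proposition~\ref{pp:x2(c)} — which lets us realize any $C'$ as $Q(A_{n-1})Q(A_n)$ and also lets us wiggle $(A_{n-1},A_n)$ — must be combined carefully; in particular one has to check that perturbing the last two factors to adjust $C'$ does not force the first $n-2$ factors out of $X''_n$. A secondary technical point is the continuity of the Jordan-basis change used implicitly (as in the proof of Proposition~\ref{pp:x2(c)}) when the lifted path's endpoint must be matched to $P^1$; but this is routine given the earlier argument. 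The base case $n=3$ is already done, so the induction machinery is the whole content.
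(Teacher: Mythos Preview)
Your approach is the paper's: show $f_{n-2}$ is a submersion on $X''_n(C)$ via Lemma~\ref{lm:product_submersion}, hence has path lifting, and conclude connectedness from connected image plus connected fibers, the fibers being handled by induction together with Proposition~\ref{pp:x2(c)}.

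There is one genuine gap. Your induction step goes from $n-2$ to $n$, so a single base case $n=3$ settles only odd $n$. For $n=4$ your recipe calls for ``$X''_{2}(C')$'', which is not defined, and nothing you have proved covers it. The paper fixes this by taking \emph{two} base cases, $n=3$ and $n=4$: for $n=3$ the fiber of $f_1$ over $\nu\in\mathcal{J}_0$ is $Q^{-1}(\nu)\times X_2'(\nu^{-1}C)$, connected by Proposition~\ref{prop:classification_q1} and Proposition~\ref{pp:x2(c)}; for $n=4$ the fiber of $f_2$ over $\nu\in\widetilde{G_+}$ is $X_2'(\nu)\times X_2(\nu^{-1}C)$, both factors connected by Proposition~\ref{pp:x2(c)} (recall $X'_2(\nu)=X_2(\nu)$ for $\nu\neq\pm Id$). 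Once both parities are seeded, the induction runs as you describe.

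One smaller correction: for $n>4$ the fiber of $f_{n-2}$ over $\nu$ is $X'_{n-2}(\nu)\times X_2(\nu^{-1}C)$, not $X''_{n-2}(\nu)\times X_2(\nu^{-1}C)$. The condition singling out $X''_n(C)$ is a non-commuting pair among indices $\leq n-2$, which on the first $n-2$ coordinates is exactly the $X'_{n-2}$ condition, not the stricter $X''_{n-2}$ one. This is harmless, since your induction hypothesis combined with Lemma~\ref{lm:x'_dense} and Remark~\ref{rk:inclusion_x(c)} yields connectedness of $X'_{n-2}(\nu)$ from that of $X''_{n-2}(\nu)$.
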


\begin{proof}
	Let us work by induction on $n$. We need then two initial cases, one for $n$ even and another for $n$ odd.

The fiber of $f_1:X''_3(C)\rightarrow \mathcal{J}_0$ at $\nu\in \mathcal{J}_0$ is  $Q^{-1}(\nu)\times X_2'(\nu^{-1}C)$. By Propositions~\ref{prop:classification_q1} and ~\ref{pp:x2(c)} both factors are connected, therefore $f_1^{-1}(\nu)$ is connected too.
	
	The fiber of $f_2:X''_4(C)\rightarrow \widetilde{G_+}$ at $\nu\in \widetilde{G_+}$ is $X_2'(\nu)\times X_2(\nu^{-1}C)$, which, by the same arguments as before, is connected.
	
	By Lemma~\ref{lm:product_submersion} $f_{n-2 \mid X''_n(C)}$ is a submersion, thus it satisfies the path lifting property. Moreover, both images $\mathcal{J}_0$ and $\widetilde{G_+}$ are connected, thus $X_n''(C)$ is connected for $n=3,4$. 
	
	For $n>4$, we apply the same argument to $f_{n-2}:X''_n(C)\rightarrow \widetilde{G_+}$. The fiber at $\nu \in \widetilde{G_+}$ is $X'_{n-2}(\nu)\times X_2(\nu^{-1}C)$, which by Remark~\ref{rk:inclusion_x(c)} and the induction hypothesis is connected. 
\end{proof}

\begin{remark} The technique to compute connected components of $X'_3(\pm Id)$ was used in~\cite{Goldman1} to compute the connected components of $W(\Sigma_{0,3})\subset \mathrm{hom}(\pi_1(\Sigma_{0,3}, \mathrm{PSL}(2,\mathbb{R})))$, where $\Sigma_{0,3}$ is the three-holed sphere and $W(\Sigma_{0,3})$ is the subset of non-commuting hyperbolic representations. The three components of $W(\Sigma_{0,3})$ are distnguished as the fibers of a relative Euler class, $e^{-1}(n)$, $n=-1,0,1$. Given $\phi \in W(\Sigma_{0,3})$, if $\chi_\phi \in (2,\infty)^3$, then $\phi \in e^{-1}(0)$. Otherwise, if $\chi_\phi \in (2, \infty)^2\times (-\infty, -2)$, then $\phi \in e^{-1}(\pm 1)$, where the components $e^{-1}(-1)$ and $e^{-1}(1)$ are interchanged if $\phi$ is conjugated by an element of $\mathrm{PGL}(2,\mathbb{R})$ not in $\mathrm{PSL}(2, \mathbb{R})$.
	
	In the non-orientable case $N_3$, by Proposition~\ref{pp:x'3_connected} we can assume that any representation is hyperbolic and we can cut out the Möbius strips in order to obtain a representation \allowbreak  $Q(\phi) \in \mathrm{hom}(\pi_1(\Sigma_{0,3}), \mathrm{PSL}(2,\mathbb{R}))$. As the trace is real, by conjugation, we can also assume it is actually in $\mathrm{PSL}(2,\mathbb{R})$ and compute in which component of $W(\Sigma_{0,3})$ it is (this component is not well defined). When $\chi_{Q\phi}\in (2,\infty)^2\times(-\infty,-2)$ it can be either in $e^{-1}(1)$ or $e^{-1}(-1)$ and  we can pass from one component to the other by conjugating by the element $[\mathrm{diag}(i,-i)]\in \mathrm{PSL}(2,\mathbb{C})$.
	
	This hints that a possible approach could have been closer to the $\mathrm{PSL}(2,\mathbb{R})$ case worked out in \cite{Palesi1}, where paths of representations joining $e^{-1}(n)$ and $e^{-1}(n+2)$ are constructed.
\end{remark}

\subsection{The rest of the connected components}

When studying connected components which are not orientation type preserving we have to take into account that $\pi_0(G)=\mathbb{Z}_2$. We define the \emph{first Stiefel-Whitney class} of a representation $\phi \in \mathrm{hom}(\pi_1(N_k), G)$ as the element $w_1(\phi)\in \mathrm{hom}(\pi_1(N_k), \pi_0(G))$ obtained by postcomposing the representation with the map $G\rightarrow \pi_0(G)$. Thus, $w_1(\phi)$ can be seen as an element of $H^1(N_k, \mathbb{Z}_2)\cong \mathbb{Z}_2^k$. For instance, the first Stiefel-Whitney class of a orientation type preserving representation is $w_1(\phi)=(1,\dots, 1)$.

In Section~\ref{section:sq_map} we defined the square maps $Q:G\rightarrow \widetilde{G_+}$ and to this point we have been interested in its restriction to $G_-$. In this section we focus on the restriction to $G_+$, which we will denote $Q_+: G_+ \rightarrow \widetilde{G_+}$. We will prove that the map $Q_+$ have really nice properties, which will allow us to use path-lifting arguments in order to prove that each fiber $w_1^{-1}(\epsilon)$, $\epsilon\in \mathbb{Z}_2^k$ has two connected components.

\begin{proposition}
\label{pp:fiber_q2}
\begin{enumerate}
\item For any $B\in \widetilde{G_+}\setminus\mathrm{tr}^{-1}(-2)$, there is a unique $A\in G_+$ such that $Q_+(A)=B$, given by
$$
A=\left[\frac{B+Id}{\sqrt{\mathrm{tr}B+2}} \right].
$$

\item The fiber $Q_+^{-1}(-Id)$ is the set $\mathrm{Ad}(G_+)\left( \begin{smallmatrix}
0 & -1 \\
1 & 0
\end{smallmatrix} \right)$.
\item For $B=\left( \begin{smallmatrix}
-1 & \tau \\
0 & -1
\end{smallmatrix} \right)$,
the fiber is empty in $G_+$.
\end{enumerate}
\end{proposition}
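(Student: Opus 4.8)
The plan is to treat the map $Q_+ : G_+ \to \widetilde{G_+}$, $[A] \mapsto A^2$, essentially by the same elementary matrix algebra used for $Q$ on $G_-$ in Proposition~\ref{prop:classification_q1}, but now the computation is much cleaner because there is no complex conjugation involved: we are simply asking, for a given $B \in \mathrm{SL}(2,\mathbb{C})$, which square roots $A$ it has in $\mathrm{SL}(2,\mathbb{C})$, up to sign. First I would record the Cayley--Hamilton identity $A^2 = (\mathrm{tr}\, A)\, A - Id$ valid for any $A \in \mathrm{SL}(2,\mathbb{C})$. Hence if $A^2 = B$ then $(\mathrm{tr}\, A)\, A = B + Id$, so $A$ is forced to be a scalar multiple of $B + Id$ whenever $\mathrm{tr}\, A \neq 0$; writing $t = \mathrm{tr}\, A$ and taking traces gives $t^2 = \mathrm{tr}\, B + 2$, and taking determinants of $tA = B + Id$ gives $t^2 = \det(B+Id) = \mathrm{tr}\, B + 2$ as well (consistently), so $A = (B+Id)/t$ with $t = \pm\sqrt{\mathrm{tr}\, B + 2}$. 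The two sign choices give the same class in $\mathrm{PSL}(2,\mathbb{C})$, which yields the unique $[A]$ of item (1). One must only check this $A$ genuinely squares to $B$: from $tA = B + Id$ one computes $t^2 A^2 = (B+Id)^2 = B^2 + 2B + Id = (\mathrm{tr}\, B)B - Id + 2B + Id = (\mathrm{tr}\, B + 2)B = t^2 B$, so $A^2 = B$.

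Next I would handle the case $\mathrm{tr}\, A = 0$, which is exactly where the above breaks down and corresponds to $B$ with $\mathrm{tr}\, B = \mathrm{tr}\, A^2 = (\mathrm{tr}\, A)^2 - 2 = -2$. If $\mathrm{tr}\, A = 0$ then Cayley--Hamilton gives $A^2 = -Id$, so the only trace-$(-2)$ element of $\widetilde{G_+}$ in the image of $Q_+$ is $-Id$ itself, and its fiber consists precisely of the trace-zero elements of $\mathrm{SL}(2,\mathbb{C})$ modulo sign. Every trace-zero element of $\mathrm{SL}(2,\mathbb{C})$ is conjugate to $\left(\begin{smallmatrix} 0 & -1 \\ 1 & 0\end{smallmatrix}\right)$ (it is diagonalizable with eigenvalues $\pm i$, the non-diagonalizable option being excluded by $\det = 1$), so $Q_+^{-1}(-Id) = \mathrm{Ad}(G_+)\left(\begin{smallmatrix} 0 & -1 \\ 1 & 0\end{smallmatrix}\right)$, giving item (2). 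Finally, for $B = \left(\begin{smallmatrix} -1 & \tau \\ 0 & -1\end{smallmatrix}\right)$ we have $\mathrm{tr}\, B = -2$ but $B \neq -Id$ (it is a nontrivial parabolic times $-Id$), so by the previous sentence it is not in the image of $Q_+$, i.e.\ the fiber is empty; item (3).

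The only mild subtlety — not really an obstacle — is being careful at the boundary $\mathrm{tr}\, B = -2$: one should make sure the dichotomy ``$\mathrm{tr}\, A = 0$ or $\mathrm{tr}\, A \neq 0$'' exhausts all cases and matches up correctly with ``$\mathrm{tr}\, B = -2$ or $\mathrm{tr}\, B \neq -2$'', and in item (1) that $\sqrt{\mathrm{tr}\, B + 2}$ is nonzero so the formula makes sense. Everything else is a direct two-line verification, so I expect no real difficulty; the content is entirely in the Cayley--Hamilton trick, which simultaneously produces the closed formula and pins down exactly when it fails.
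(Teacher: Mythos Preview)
Your proposal is correct and follows essentially the same route as the paper: the paper invokes the trace identity $\mathrm{tr}(A^2)=(\mathrm{tr}\,A)^2-2\det A$ together with Cayley--Hamilton for item~(1), and then observes that $\mathrm{tr}\,A=0$ forces $A^2=-Id$ to get items~(2) and~(3). Your write-up is simply a more explicit version of the same argument, including the verification that the formula really squares to $B$ and the remark that trace-zero elements form a single conjugacy class.
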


\begin{proof}
The proof is analogous to the one of the $\mathrm{PSL}(2,\mathbb{R})$-case which can be found in \cite{Palesi1}. Given any matrix $A\in \mathrm{M}_{2\times 2}(\mathbb{R})$, $\mathrm{tr}(A^2)=\mathrm{tr}(A)^2-2\ \mathrm{det}(A)$. Thus, the first assertion is due to the previous formula and the Cayley-Hamilton theorem. Regarding the second one,  it is a straightforward computation that for any $A\in G_+$ such that $\mathrm{tr}(A)=0$, $A^2=-Id$. This also proves the last assertion.
\end{proof}

Proposition~\ref{pp:fiber_q2} (1.) shows that the map $Q_+^{-1}:\widetilde{G_+}\setminus \mathrm{tr}^{-1}(-2)$ is well-defined and it is smooth. Thus, paths can always (and uniquely) be lifted as long as they avoid $-Id$. This shows that the statements made in Subsection~\ref{subsection:orientation_preserving} for the orientation type preserving components can be done in this context for the rest of components with small modifications.

\begin{theorem}
The representation variety $\mathrm{hom}(\pi_1(N_k), G)$ has $2^{k+1}$ connected components.
\end{theorem}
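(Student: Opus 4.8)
The plan is to identify the connected components via the pair of Stiefel--Whitney invariants $(w_1,w_2)$, and to count them by showing the map $\phi\mapsto(w_1(\phi),w_2(\phi))$ is a bijection onto its image and that its image has exactly $2^{k+1}$ elements. The source $w_1(\phi)\in H^1(N_k,\mathbb{Z}_2)\cong\mathbb{Z}_2^k$ and $w_2(\phi)\in\mathbb{Z}_2$, so a priori there are $2^{k+1}$ possible values; the two things to prove are (a) every value in $\mathbb{Z}_2^k\times\mathbb{Z}_2$ is realized, and (b) each fiber $w_1^{-1}(\epsilon)\cap w_2^{-1}(u)$ is non-empty and connected.

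For the orientation type preserving case $w_1=(1,\dots,1)$ this is exactly the content of the preceding subsection: $\mathrm{hom}^{tp}(\pi_1(N_k),G)=X_k(Id)\cup X_k(-Id)$, and the proposition on $X''_n(C)$ shows $X_k(Id)$ and $X_k(-Id)$ are each non-empty and connected, giving two components distinguished by $w_2$. So the remaining work is to handle a general $\epsilon\in\mathbb{Z}_2^k$. First I would fix a representative: given $\epsilon=(\epsilon_1,\dots,\epsilon_k)$, the generators with $\epsilon_i=1$ map into $G_-$ and those with $\epsilon_i=0$ map into $G_+$. After a change of the generating set of $\pi_1(N_k)$ realizing the same $w_1$ (which is what matters, since $w_1$ only depends on the homomorphism $\pi_1\to\pi_0(G)$), one can assume the orientation-reversing generators are grouped together; then the defining relation $A_1^2\cdots A_k^2=[Id]$ becomes $\prod Q(A_i)=\pm Id$ where now some $Q(A_i)$ come from the restriction $Q_+:G_+\to\widetilde{G_+}$ and the rest from $Q:G_-\to\widetilde{G_+}$.

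The key point is that Proposition~\ref{pp:fiber_q2} makes $Q_+$ behave even better than $Q$ restricted to $G_-$: away from $\mathrm{tr}^{-1}(-2)$ it has a canonical smooth section $Q_+^{-1}$, and over $-Id$ its fiber is the connected set $\mathrm{Ad}(G_+)\left(\begin{smallmatrix}0&-1\\1&0\end{smallmatrix}\right)$. Consequently the path-lifting and submersion lemmas of Section~\ref{section:path_lifting} — Lemma~\ref{lm:submersion_square}, Lemma~\ref{lm:product_submersion}, Corollary~\ref{Coro:path-lifting-chiq} — all go through verbatim when some factors are squares of orientation-preserving isometries rather than orientation-reversing ones: the differential of a product $\prod Q_{\pm}(A_i)$ is still a sum of rank-five contributions, one per factor, and regularity holds as soon as two of them generate. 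Therefore the same induction as in the proposition on $X''_n(C)$ — peeling off the last two generators, computing the fiber of $f_{n-2}$ as a product of a lower $X''$ and an $X_2$, and using that $X_2(C)$ is connected for all $C$ — shows that each set $\{(A_1,\dots,A_k):\prod Q_\pm(A_i)=(-Id)^u\}$ with prescribed orientation types is non-empty and connected. Non-emptiness at both values $u=\pm1$ follows, as in the $tp$ case, by producing one explicit representation with $\prod Q_\pm(A_i)=Id$ (e.g. using hyperbolic factors with controllable traces) and one with $\prod=-Id$ (insert an elliptic or $\pm Id$-type factor using the fibers computed in Proposition~\ref{prop:classification_q1} and Proposition~\ref{pp:fiber_q2}(2)).

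The main obstacle I anticipate is bookkeeping rather than a genuinely new idea: one must check that the induction on $n$ still closes when the two distinguished non-commuting factors $Q(A_i),Q(A_j)$ may now be a mix of $G_-$-squares and $G_+$-squares, and in particular that the base cases $n=3,4$ (the fibers of $f_1$ and $f_2$) remain connected — this needs the $G_+$ analogue of "every fiber $Q_+^{-1}(\nu)$ is connected", which is immediate from Proposition~\ref{pp:fiber_q2} since the fiber is a single point off $\mathrm{tr}^{-1}(-2)$, a connected set over $-Id$, and empty over $\mathrm{tr}^{-1}(-2)$. A second, smaller subtlety is the honest identification of the lifted relator $\prod Q_\pm(A_i)\in\{\pm Id\}$ with the second Stiefel--Whitney class $w_2$ of the associated flat $\widetilde{G_+}$-bundle in the mixed-orientation setting; this is the standard obstruction-theoretic fact (cf.~\cite{MilnorGBundle}) that lifting the holonomy through $\widetilde{G_+}\to G_+$ is obstructed precisely by $w_2$, and it is insensitive to which generators are orientation-reversing. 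Assembling: the total count is $\sum_{\epsilon\in\mathbb{Z}_2^k}\#\{u\in\mathbb{Z}_2:\text{the }(\epsilon,u)\text{-component is non-empty}\}=\sum_\epsilon 2=2^{k+1}$.
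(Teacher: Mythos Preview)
Your proposal is correct and follows essentially the same route as the paper: classify components by the pair $(w_1,w_2)$, handle the orientation type preserving case $w_1=(1,\dots,1)$ via the $X_n(C)/X''_n(C)$ induction already done in Subsection~\ref{subsection:orientation_preserving}, and then observe that for a general $\epsilon$ the only change is that some factors use $Q_+$ instead of $Q$, which by Proposition~\ref{pp:fiber_q2} has even better path-lifting behaviour (a smooth section off $\mathrm{tr}^{-1}(-2)$ and a connected fiber over $-Id$), so the submersion and path-lifting lemmas of Section~\ref{section:path_lifting} go through verbatim. The paper's own argument at this point is in fact terser than yours --- it simply states that the results of Subsection~\ref{subsection:orientation_preserving} carry over ``with small modifications'' once Proposition~\ref{pp:fiber_q2} is in hand --- so your write-up is, if anything, more explicit about the bookkeeping.

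One small remark: your step of changing the generating set of $\pi_1(N_k)$ so that the $G_-$-generators are grouped together is not needed and is not what the paper does; the induction via $f_{n-2}$ and the density/connectedness arguments for $X''_n(C)$ are insensitive to which of the factors are $Q$-images and which are $Q_+$-images, so you can run them directly on the given ordering of generators. This sidesteps having to justify the existence of such an automorphism of $\pi_1(N_k)$.
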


\subsection*{Acknowledgements}
The author would like to thank his PhD advisor, Joan Porti, for his helpful comments and guidance during the writing of this paper.

\bibliographystyle{plain}
\bibliography{ConnectedComponents}

\noindent \textsc{Departament de Matem\`atiques, Universitat Aut\`onoma de Barcelona, 
	08193 Cerdanyola del Vall\`es}

\noindent \textsf{jduran@mat.uab.cat}
\end{document}